\theoremstyle{plain}
\newtheorem{theorem}{Theorem}
\newtheorem{lemma}[theorem]{Lemma}
\newtheorem{proposition}[theorem]{Proposition}
\newtheorem{corollary}[theorem]{Corollary}
\theoremstyle{definition}
\newtheorem{question}[theorem]{Question}
\newcommand{\cl}{\mathsf{cl}}
\newcommand{\re}{\upharpoonright}
\newcommand{\dime}{\mathsf{dim}}
\newcommand{\Ind}{\mathsf{Ind}}
\newcommand{\dom}{\mathsf{dom}}
\newcommand{\ran}{\mathsf{ran}}
\newcommand{\supi}{\mathsf{sup}}
\newcommand{\ZFC}{\mathsf{ZFC}}
\newcommand{\Fin}{\mathsf{Fin}}
\newcommand{\Gd}{\mathsf{G_\delta}}
\newcommand{\BB}{\mathcal{B}}
\newcommand{\PPP}{\mathbb{P}}
\newcommand{\QQQ}{\mathbb{Q}}
\begin{document}

\title{Infinite powers and Cohen reals}

\author{Andrea Medini}
\address{Kurt G\"odel Research Center for Mathematical Logic
\newline\indent University of Vienna
\newline\indent W\"ahringer Stra{\ss}e 25
\newline\indent A-1090 Wien, Austria}
\email{andrea.medini@univie.ac.at}
\urladdr{http://www.logic.univie.ac.at/\~{}medinia2/}

\author{Jan van Mill}
\address{KdV Institute for Mathematics
\newline\indent University of Amsterdam
\newline\indent Science Park 904
\newline\indent P.O. Box 94248
\newline\indent 1090 GE Amsterdam, The Netherlands}
\email{j.vanMill@uva.nl}
\urladdr{http://staff.fnwi.uva.nl/j.vanmill/}

\author{Lyubomyr Zdomskyy}
\address{Kurt G\"odel Research Center for Mathematical Logic
\newline\indent University of Vienna
\newline\indent W\"ahringer Stra{\ss}e 25
\newline\indent A-1090 Wien, Austria}
\email{lyubomyr.zdomskyy@univie.ac.at}
\urladdr{http://www.logic.univie.ac.at/\~{}lzdomsky/}

\keywords{Infinite power, zero-dimensional, first-countable, homogeneous, Cohen real, h-homogeneous, rigid.}

\thanks{The first-listed author acknowledges the support of the FWF grant M 1851-N35. The second-listed author acknowledges generous hospitality and support from the Kurt G\"odel Research Center for Mathematical Logic. The third-listed author acknowledges the support of the FWF grants I 1209-N25 and I 2374-N35.}

\date{May 31, 2017}

\begin{abstract}
We give a consistent example of a zero-dimensional separable metrizable space $Z$ such that every homeomorphism of $Z^\omega$ acts like a permutation of the coordinates almost everywhere. Furthermore, this permutation varies continuously. This shows that a result of Dow and Pearl is sharp, and gives some insight into an open problem of Terada. Our example $Z$ is simply the set of $\omega_1$ Cohen reals, viewed as a subspace of $2^\omega$.
\end{abstract}

\maketitle

\section{Introduction}

A space $X$ is \emph{homogeneous} if for every pair $(x,y)$ of points of $X$ there exists a homeomorphism $f:X\longrightarrow X$ such that $f(x)=y$. This is a classical notion, which has been studied in depth (see for example \cite{arkhangelskiivanmill}).

It is an interesting theme in general topology that taking infinite powers tends to improve the homogeneity-type properties of a space. The first result of this kind is due to Keller, who showed that the Hilbert cube $[0,1]^\omega$ is homogeneous (see \cite{keller}). But this phenomenon is particularly striking in the zero-dimensional case, as Lawrence showed that $X^\omega$ is homogeneous for every separable metrizable zero-dimensional space $X$ (see \cite{lawrence}), answering a question of Fitzpatrick and Zhou from \cite{fitzpatrickzhou}. In fact, the following result (which answers a question of Gruenhage from \cite{gruenhage}) shows that this holds for a much wider class of spaces (see \cite[Theorem 3]{dowpearl}).

\begin{theorem}[Dow, Pearl]\label{dowpearlmain}
Let $X$ be a zero-dimensional first-countable space. Then $X^\omega$ is homogeneous.	
\end{theorem}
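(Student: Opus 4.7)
The plan is a back-and-forth construction producing, for any $p,q\in X^\omega$, a homeomorphism $h:X^\omega\longrightarrow X^\omega$ with $h(p)=q$; since being in the orbit of a homeomorphism is an equivalence relation on points, this will suffice. The fundamental building block is the following conditional swap. Given clopen $A\subseteq X$ and distinct coordinates $i,j\in\omega$, let $\sigma_{A,i,j}:X^\omega\longrightarrow X^\omega$ be the involution that swaps the values at coordinates $i$ and $j$ whenever exactly one of those values lies in $A$, and is the identity otherwise. Because $A$ is clopen and the swap map is continuous, $\sigma_{A,i,j}$ is a homeomorphism of $X^\omega$; and if we want to move the value in coordinate $i$ from outside $A$ to inside $A$, we can do so by first arranging a witness of $A$ in some ``reservoir'' coordinate $j$ and then applying $\sigma_{A,i,j}$.

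First I would use first-countability together with zero-dimensionality to fix decreasing clopen neighborhood bases $\{U_n^k:k\in\omega\}$ at $p_n$ and $\{V_n^k:k\in\omega\}$ at $q_n$, for every $n\in\omega$. Then I would enumerate the pairs $(n,k)$ in a back-and-forth schedule, alternating between ``forward'' stages (push the current image of $p$ into $V_n^k$ at coordinate $n$) and ``backward'' stages (pull the current preimage of $q$ into $U_n^k$ at coordinate $n$). At each stage the action is implemented by a single conditional swap $\sigma_{A,n,N}$, where $A$ is chosen from the appropriate base and $N$ is a fresh reservoir coordinate large enough to have been untouched by all previous stages. The reservoir for $N$ is filled, in turn, by swapping with an even fresher coordinate at a future stage, producing a coherent infinite composition $h=\cdots\circ h_2\circ h_1\circ h_0$.

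The main obstacle will be showing that $h$ is a well-defined homeomorphism, since we cannot appeal to uniform convergence as in the metric case handled by Lawrence. The strategy is to argue coordinate by coordinate: by construction, for every fixed $n\in\omega$ only finitely many stages ever touch coordinate $n$, so the pointwise limit $h$ exists and, when restricted to any finite set of coordinates, factors through a finite composition of the $\sigma$'s; this gives continuity of $h$ on each basic open set of $X^\omega$, and symmetry of the back-and-forth gives continuity of $h^{-1}$. Nested application of the basic neighborhoods then yields $h(p)\in\bigcap_k(V_0^k\times\cdots\times V_k^k\times X^\omega)=\{q\}$, as desired. The delicate point, and the place where first countability is indispensable, is the scheduling of reservoir coordinates: it must simultaneously terminate the action at each coordinate in finitely many steps and keep the freshly introduced reservoirs converging to the correct limit values, a role played by the metric in Lawrence's argument and replaced here by the fixed clopen neighborhood bases.
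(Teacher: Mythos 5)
The paper does not prove this theorem itself; it cites Dow and Pearl, whose argument has two steps: first a version for spaces $X$ with $D\subseteq X\subseteq\cl(D)$ for some $D\subseteq 2^\omega$ (this is Theorem \ref{dowpearlprecise}), and then a reduction of the general first-countable zero-dimensional case to that one via elementary submodels. Your building block $\sigma_{A,i,j}$ is indeed the right one --- it is essentially the move used by Lawrence and by Dow and Pearl --- but as scheduled your construction cannot work, for a concrete reason. If every coordinate $n$ is touched by only finitely many stages, then $h$ followed by the projection to coordinate $n$ factors through a finite composition of conditional swaps, so $h(z)(n)\in\{z(i):i\in\omega\}$ for every $z$ and every $n$. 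In particular $h(p)=q$ would force each $q_n$ to occur among the coordinates of $p$, which fails already for $X=2^\omega$ with $p$ the constant sequence at $\bar{0}$ and $q$ the constant sequence at $\bar{1}$. The essential missing idea is that at certain coordinates the homeomorphism must \emph{inject new values}: one fixes a countable dense set $D$ and, over infinitely many stages with shrinking clopen sets, swaps into a reservoir coordinate a sequence of values drawn from $D$ converging to the desired target, so that at such coordinates infinitely many stages act and the limit value is a genuinely new point of $\cl(D)$ rather than a permuted old one. This is precisely the content of the clauses $f(z)(i)\in D$ for $i\notin\dom(h_z)$ in Theorem \ref{dowpearlprecise}, and, ironically, precisely the phenomenon that Theorem \ref{main} of this paper isolates as unavoidable. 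Proving that such an infinite composition converges pointwise and is a homeomorphism is where the real work lies; your uniform finiteness claim sidesteps rather than solves it, since whether a conditional swap acts at a given coordinate depends on the point.

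Even granting a correct back-and-forth in the separable metrizable setting, your proposal does not address the actual content of the Dow--Pearl theorem, namely the passage to an arbitrary first-countable zero-dimensional $X$. Such a space need not be separable or metrizable, so there is no countable dense set from which to draw the injected reservoir values, and the clopen neighborhood bases at the countably many points $p_n$, $q_n$ give no control over the rest of the space, which is what continuity of $h$ at points other than $p$ requires. Dow and Pearl handle this with an elementary-submodel argument reducing the general case to the case $D\subseteq X\subseteq\cl(D)$ with $D\subseteq 2^\omega$ countable; some substitute for that reduction is indispensable and is entirely absent from your sketch.
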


In order to better motivate our result, we need to dig a little deeper into the proof of Theorem \ref{dowpearlmain}. The first step of this proof is given by the following result (see \cite[Theorem 1]{dowpearl} and the subsequent remarks).\footnote{\,The second step consists in reducing the general case to the first step by using the technique of elementary submodels, but this will not be relevant for us.} Here, a \emph{partial permutation} of $\omega$ is a function of the form $h\re I$, where $h$ is a permutation of $\omega$ and $I\subseteq\omega$.

\begin{theorem}[Dow, Pearl]\label{dowpearlprecise}
If $D\subseteq 2^\omega$, $x,y\in D^\omega$, and $D\subseteq X\subseteq\cl(D)$, then there exist a homeomorphism $f:X^\omega\longrightarrow X^\omega$ and partial permutations $h_z$ of $\omega$ for $z\in X^\omega$ satisfying the following conditions:
\begin{itemize}
\item $f(x)=y$,
\item $\forall z\in X^\omega\,\forall i\in\dom(h_z)\,\exists U\ni z\text{ open in }X^\omega\,\forall w\in U\, (h_w(i)=h_z(i))$,
\item $\forall z\in X^\omega
\left\{
\begin{array}{ll} f(z)(i) = z(h_z(i)) & \textrm{if }i\in\dom(h_z),\\
f(z)(i) \in D & \textrm{if }i\notin\dom(h_z),\\
z(i) \in D & \textrm{if }i\notin\ran(h_z).\\
\end{array}
\right.
$
\end{itemize}
\end{theorem}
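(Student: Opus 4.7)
The plan is to construct $f$ as a coherent union of maps defined on a refining sequence of clopen partitions of $X^\omega$. Since $X\subseteq 2^\omega$, the product $X^\omega$ carries a natural clopen basis of boxes $V$ specified by fixing finite binary initial segments in finitely many coordinates. To each such $V$ I would assign a single partial permutation $h_V$ of $\omega$ and set $h_z=h_V$ whenever $z\in V$; the second bullet (local constancy of $h_z$ on its domain) then reduces to the requirement that passing to a finer piece $V'\subseteq V$ only extends $h_V$, never alters values already decided.

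Fix a partition $\omega=\bigsqcup_{n<\omega}A_n$ into infinite blocks. I would then carry out a recursion on $n$: at stage $n$ we maintain a finite clopen partition $\mathcal{V}_n$ of $X^\omega$ together with partial permutations $h_V$ for $V\in\mathcal{V}_n$, so that every coordinate in $A_0\cup\cdots\cup A_{n-1}$ has been \emph{handled} in the sense that, uniformly for $V\in\mathcal{V}_n$, either $i\in\dom(h_V)$ (paired with some coordinate in $\ran(h_V)$) or the values $z(i)$ for $z\in V$ are pinned to lie in $D$ via the fineness of the box. At each stage I refine $\mathcal{V}_n$ using the clopen basis of $X^\omega$ and extend each $h_V$ accordingly. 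In the limit, $h_z:=\bigcup\{h_V:z\in V\in\mathcal{V}_n,\ n<\omega\}$ is a partial permutation, and I would define $f(z)(i):=z(h_z(i))$ when $i\in\dom(h_z)$, and $f(z)(i):=d_V(i)\in D$ otherwise, where $d_V(i)\in D$ is a value chosen at the stage when $i$ was handled on the piece $V\ni z$.

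To enforce $f(x)=y$, I would seed the recursion so that at every stage $x$ and $y$ lie in matching pieces of $\mathcal{V}_n$, and use the fact that $x,y\in D^\omega$ to set $f(x)(i):=y(i)\in D$ for coordinates $i$ not placed in $\dom(h_V)$ for the box $V\ni x$; the symmetric condition $z(i)\in D$ for $i\notin\ran(h_z)$ becomes automatic at $z=x$.

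The main obstacle will be the bookkeeping needed to guarantee that $f$ is a \emph{homeomorphism}, not just a continuous function. Odd stages of the recursion must be driven by source-side requirements (controlling $f$), while even stages must be driven by target-side requirements (controlling $f^{-1}$), and the two sides must weave together so that every coordinate of every $z$ is eventually handled from both directions consistently. This back-and-forth must simultaneously produce a bijection, continuous inverses, and the pointwise $D$-membership conditions uniformly across $X^\omega$; carrying it out exploits both the density chain $D\subseteq X\subseteq\cl(D)$ and the fact that the clopen basis of $2^\omega$ lets us shrink boxes enough to freeze any desired finitely many target coordinates into $D$.
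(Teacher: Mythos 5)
The paper itself does not prove this statement---it is quoted from Dow and Pearl---so the comparison is with their argument (which refines Lawrence's back-and-forth). Your skeleton is the right one: a fusion of partial permutations assigned to a refining sequence of clopen boxes, with alternating stages serving $f$ and $f^{-1}$. But one step in your outline fails as stated, and it is precisely the step that makes the theorem delicate. You propose that at a finite stage each handled coordinate $i$ is either matched by $h_V$ or else ``the values $z(i)$ for $z\in V$ are pinned to lie in $D$ via the fineness of the box.'' No finite clopen box can do this: if $D$ is, say, a countable dense subset of $X=2^\omega$, then every nonempty clopen box contains points $z$ with $z(i)\notin D$, so the clause ``$z(i)\in D$ whenever $i\notin\ran(h_z)$'' cannot be secured uniformly on a piece by finitely much refinement. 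In the actual construction the decision is provisional and $z$-dependent: a coordinate $i$ is tentatively frozen to a designated $d\in D$, and at each later stage the piece containing $z$ is split according to whether $z(i)$ still agrees with $d$ on a longer initial segment; if $z(i)$ ever visibly deviates, $i$ is at that moment matched into $\ran(h_z)$ (so that $z(i)$ gets copied to some output coordinate), and only if $z(i)$ agrees with $d$ forever---that is, $z(i)=d\in D$---does $i$ remain outside $\ran(h_z)$. This is why $\dom(h_z)$ and $\ran(h_z)$ genuinely vary with $z$, and why the theorem asserts only local constancy of the values $h_z(i)$ for $i\in\dom(h_z)$, not of the domains themselves; your uniform-per-box version would make $\dom(h_V)$ and $\ran(h_V)$ locally constant, which is incompatible with the third bullet unless $D$ is relatively open in $X$.

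Beyond that, the part you flag as ``the main obstacle''---weaving the source-side and target-side requirements so that the limit map is a bijection with continuous inverse while every unmatched input coordinate lands in $D$---is not bookkeeping around the proof; it is the proof, and the $z$-dependent deferral just described must be threaded through it on both sides simultaneously. As it stands, the proposal records a correct statement of what must be arranged rather than an argument that arranges it.
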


Notice that if $z$ belongs to the set
$$
C(D)=\bigcap_{d\in D,i\in\omega}\{w\in X^\omega:w(i)\neq d\}\cap\bigcap_{d\in D,i\in\omega}\{w\in X^\omega:f(w)(i)\neq d\},
$$
then $\dom(h_z)=\ran(h_z)=\omega$, so that $h_z$ will be a (full) permutation of the coordinates. Furthermore, a closer inspection of the construction of $f$ shows that there will always be a countable\footnote{\,In fact, $D'\subseteq\{\inf(D\cap[s]):s\in\Fin(\omega\times\omega,2)\}$, where the $\inf$ is taken with respect to an arbitrary well-order of $D$ fixed at the beginning of the proof.} $D'\subseteq D$ such that $f$ satisfies the above conditions for $D'$. Since $C(D')$ will be comeager, we conclude that for every homeomorphism $f$ produced by (the proof of) Theorem \ref{dowpearlprecise}, there exists a comeager subset of $X^\omega$ at every point of which $f$ acts as a (full) permutation of the coordinates. Furthermore, this permutation varies continuously.

The following example, which is our main result, shows that homeomorphisms of this kind are the only ones that can be constructed in $\ZFC$. So, in a sense, Theorem \ref{dowpearlprecise} is sharp. In Section 5, we will see that Theorem \ref{main} is also relevant to an open problem of Terada.
\begin{theorem}\label{main}
It is consistent that there exists a zero-dimensional separable metrizable space $Z$ satisfying the following conditions:
\begin{enumerate}
\item\label{baire} $Z$ is Baire (hence $Z^\omega$ is Baire),
\item\label{permutation} For every homeomorphism $f:Z^\omega\longrightarrow Z^\omega$ there exists a comeager subset $C$ of $Z^\omega$ such that for every $z\in C$ there exists a bijection $h_z:\omega\longrightarrow\omega$ such that $f(z)(i)=z(h_z(i))$ for all $i\in\omega$,
\item\label{continuous} The function $h:C\longrightarrow\omega^\omega$ defined by $h(z)=h_z$ is continuous.
\end{enumerate}
\end{theorem}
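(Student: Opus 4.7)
The plan is to start with a ground model $V\models\mathsf{CH}$ and work in the generic extension $V[G]$ by $\PPP=\Fin(\omega_1\times\omega,2)$. Writing $c_\alpha(n)=G(\alpha,n)$ for the $\alpha$-th Cohen real ($\alpha<\omega_1$), set $Z=\{c_\alpha:\alpha<\omega_1\}\subseteq 2^\omega$. Clearly $Z$ is zero-dimensional, separable, and metrizable.

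Baireness of $Z$ (and, by an analogous argument, of $Z^\omega$) rests on the mutual genericity of the $c_\alpha$'s: given dense open subsets $V_n\cap Z$ of $Z$ with $V_n$ open in $2^\omega$, density of $Z$ in $2^\omega$ implies each $V_n$ is dense in $2^\omega$. Picking a countable $B\subseteq\omega_1$ such that all the $V_n$'s are coded in $V[G\re B]$, every $c_\alpha$ with $\alpha\notin B$ is Cohen-generic over $V[G\re B]$ and therefore lies in $\bigcap_n V_n$; since the set of such $c_\alpha$ is dense in $2^\omega$, $\bigcap_n(V_n\cap Z)$ is dense in $Z$.

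For condition (\ref{permutation}), fix a homeomorphism $f:Z^\omega\longrightarrow Z^\omega$ and, using the ccc of $\PPP$, choose a countable $A\subseteq\omega_1$ such that a Borel code for $f$ lies in $V[G\re A]$. Let $D\subseteq Z^\omega$ be the comeager set of $z$ whose coordinates are pairwise distinct reals $c_{\alpha_i}$ with $\alpha_i\in\omega_1\setminus A$; for $z\in D$ write $f(z)(i)=c_{\beta_i}$. For each $\alpha\in A$ the set $\{z\in Z^\omega:f(z)(i)=c_\alpha\}$ is closed, and it must be nowhere-dense---otherwise $f$ would map a nonempty open set into the closed nowhere-dense set $\{w\in Z^\omega:w(i)=c_\alpha\}$ ($Z$ has no isolated points), contradicting that $f$ is open. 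So the countable union over $\alpha\in A$ is meager, and $\beta_i\notin A$ on a comeager set. On the other hand, if $\beta_i\notin A$ and $\beta_i\neq\alpha_j$ for every $j$, then $c_{\beta_i}$ is Cohen-generic over $V[G\re(A\cup\{\alpha_j:j\in\omega\})]$, while $f(z)(i)$ lies in this model (since $f$ is coded in $V[G\re A]$ and $z$ is in the extension), a contradiction. Hence $\beta_i=\alpha_{h_z(i)}$ for a unique $h_z(i)\in\omega$; applying the same analysis to $f^{-1}$ together with distinctness of coordinates shows $h_z$ is a bijection.

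For condition (\ref{continuous}), set $E_{i,j}=\{z\in Z^\omega:f(z)(i)=z(j)\}$, a closed subset of $Z^\omega$. The previous step gives $D\subseteq\bigcup_j E_{i,j}$, so this countable union of closed sets is comeager; Baireness of $Z^\omega$ then implies $O_i:=\bigcup_j\mathrm{int}(E_{i,j})$ is dense open, since if some nonempty open $U$ were disjoint from $O_i$ then each $U\cap E_{i,j}$ would be nowhere-dense in $U$, making their union meager in $U$ despite containing the comeager set $U\cap D$. Setting $C=D\cap\bigcap_i O_i$ yields a comeager set on which $z\mapsto h_z(i)$ is locally constant for every $i$, so $h:C\longrightarrow\omega^\omega$ is continuous. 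The main obstacle is the step in condition (\ref{permutation}) showing that $\beta_i$ must coincide with some $\alpha_j$; this relies crucially on the product structure of $\PPP$ and on the fact that Cohen reals indexed by disjoint sets remain mutually generic over each other's intermediate models.
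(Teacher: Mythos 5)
Your choice of model ($Z=$ the $\omega_1$ Cohen reals over $\Fin(\omega_1\times\omega,2)$) and the treatment of condition (\ref{baire}) match the paper, and your argument for condition (\ref{continuous}) is a correct and genuinely different route: instead of the paper's forcing argument (showing that a finite condition decides $h_x(n)$), you observe that each $E_{i,j}=\{z:f(z)(i)=z(j)\}$ is closed, that condition (\ref{permutation}) makes $\bigcup_j E_{i,j}$ comeager, and that Baireness forces $\bigcup_j\mathrm{int}(E_{i,j})$ to be dense open; this is purely topological, needs no further genericity, and would simplify Section 4 of the paper.

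However, there is a genuine gap at the heart of your argument for condition (\ref{permutation}), in the step ``$f(z)(i)$ lies in this model (since $f$ is coded in $V[G\re A]$ and $z$ is in the extension).'' For an arbitrary $z=\langle c_{\alpha_j}:j\in\omega\rangle$ in your comeager set $D$ there is no reason that $z$ (as an element of $2^{\omega\times\omega}$) belongs to $V[G\re(A\cup\{\alpha_j:j\in\omega\})]$: the \emph{enumeration} $j\mapsto\alpha_j$ is itself a new real of $V[G]$, and it may require further Cohen coordinates to compute. For instance, take a ground-model injective sequence $\langle\alpha_j\rangle$ disjoint from $A$, a fresh $\beta$, and let $z$ list the $c_{\alpha_j}$'s in an order scrambled according to $c_\beta$; then $z\in D$, but $z\notin V[G\re(A\cup\{\alpha_j:j\})]$, since $z$ computes $c_\beta$. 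All you can say in general is that $z\in V[G\re B]$ for some countable $B\supseteq\{\alpha_j:j\}$, whence $\beta_i\in A\cup B$ --- which does not yield $\beta_i\in\{\alpha_j:j\}$. This is exactly the difficulty the paper's Lemma \ref{existsgamma} is built to circumvent: one argues by contradiction, uses the Baire property of the relevant Borel set to localize to some $[s]$, applies Oxtoby's fiber lemma to choose the first finitely many coordinates compatibly with $s$, and then completes the tuple with a sequence of fresh Cohen reals whose enumeration lies \emph{in the ground model}, so that the genericity computation you want actually goes through for that witness. A secondary (repairable) imprecision is the phrase ``a Borel code for $f$'': $f$ is a homeomorphism between non-Borel subspaces of $2^{\omega\times\omega}$, so one must first extend it via Lavrentieff's theorem to a Borel map on a $\Gd$ set before speaking of codes and of absolute evaluation of $f(z)$ in intermediate models.
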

\begin{proof}
Let $M$ be a countable transitive model of $\ZFC$. Define $\PPP=\Fin(\omega_1\times\omega,2)$. Let $G$ be a $\PPP$-generic filter over $M$. In the forcing extension $M[G]$, we will denote by $c_\alpha:\omega\longrightarrow\omega$ for $\alpha\in\omega_1$ the $\alpha$-th Cohen real, that is $c_\alpha(j)=(\bigcup G)(\alpha,j)$ for each $\alpha\in\omega_1$ and $j\in\omega$. We claim that $Z=\{c_\alpha:\alpha\in\omega_1\}$ has the desired properties.

The fact that $Z$ is Baire follows from Corollary \ref{nowheremeager} and Proposition \ref{nowheremeagerequivalence}. By Theorem \ref{baireproduct}, this implies that $Z^\omega$ is Baire as well.
Condition $(\ref{permutation})$ follows immediately from Lemmas \ref{existsgamma} and \ref{existsgammaimpliespermutation}, while Condition $(\ref{continuous})$ is proved in Section 4.
\end{proof}

We remark that our initial approach to Theorem \ref{main} was to construct a space satisfying Conditions $(\ref{baire})$ and $(\ref{permutation})$ by a transfinite recursion of length $\mathfrak{c}$, using the assumption $\mathsf{cov}(\mathcal{M})=\mathfrak{c}$ to make sure that an analogue of Lemma \ref{existsgamma} would hold.\footnote{\,Recall that $\mathsf{cov}(\mathcal{M})$ is the minimum size of a collection of meager subsets of $2^\omega$ whose union is $2^\omega$.}
Subsequently, we realized that the set of Cohen reals has the same properties, and that it satisfies Condition $(\ref{continuous})$ as well. However, we do not know the answer to the following question.

\begin{question}
Is it possible to construct in $\ZFC$ a space $Z$ as in Theorem \ref{main}?
\end{question}

\section{Preliminaries and notation}

We will assume familiarity with the basic theory of forcing and Borel codes (see for example \cite{kunen} and \cite{jech}). We will also assume familiarity with Baire category. Our references for general topology will be \cite{engelking} and \cite{vanmill}.

We will often be dealing with $\omega$-th powers of subspaces of $2^\omega$. Therefore, for simplicity, we will identify an element $x\in (2^\omega)^\omega$ with an element of $2^{\omega\times\omega}$ by setting $x(i,j)=x(i)(j)$ for $i,j\in\omega$. Given sets $I$ and $J$, we will denote by $\Fin(I,J)$ the set of functions $s$ such that $\dom(s)$ is a finite subset of $I$ and $\ran(s)$ is a (finite) subset of $J$. Given $s\in\Fin(\omega,2)$, we will use the notation $[s]=\{x\in 2^\omega:s\subseteq x\}$. Similarly, given $s\in\Fin(\omega\times\omega,2)$, we will use the notation $[s]=\{x\in 2^{\omega\times\omega}:s\subseteq x\}$.

We will be freely using the following three results. We leave to the reader the proofs of the first two. For a proof of the third, see \cite[Theorem 3]{oxtoby}. Recall the following definitions.
A subset $S$ of a space $X$ is \emph{nowhere meager} if $S\cap U$ is non-meager in $X$ for every non-empty open subset $U$ of $X$.
A \emph{pseudobase} for a space $X$ is a collection $\BB$ consisting of non-empty open subsets of $X$ such that for every non-empty open subset $U$ of $X$ there exists $B\in\BB$ such that $B\subseteq U$.
\begin{proposition}\label{meagersubspace}
Let $Y$ be a dense subspace of the space $X$. If $S$ is meager in $X$ then $S\cap Y$ is meager in $Y$.
\end{proposition}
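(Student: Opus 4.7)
The plan is to reduce to the nowhere-dense case, and then use the density of $Y$ in $X$ to transfer nowhere-denseness from $X$ to $Y$. Since meagerness is preserved under countable unions, if I write $S=\bigcup_{n\in\omega}S_n$ with each $S_n$ nowhere dense in $X$, then proving $S\cap Y=\bigcup_{n\in\omega}(S_n\cap Y)$ is meager in $Y$ boils down to showing the following claim: whenever $N$ is nowhere dense in $X$ and $Y$ is dense in $X$, the set $N\cap Y$ is nowhere dense in $Y$.

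To prove the claim, I would observe that the closure of $N\cap Y$ in $Y$ is contained in $\overline{N}^X\cap Y$, so it suffices to check that $\overline{N}^X\cap Y$ has empty interior in $Y$. Suppose for contradiction that some non-empty open subset of $Y$ is contained in $\overline{N}^X\cap Y$; this set has the form $U\cap Y$ for some open $U\subseteq X$, and $U\cap Y\neq\emptyset$ forces $U\neq\emptyset$. By density of $Y$ in $X$, the set $U\cap Y$ is dense in $U$, so
\[
U\subseteq\overline{U\cap Y}^X\subseteq\overline{\overline{N}^X\cap Y}^X\subseteq\overline{N}^X,
\]
where the last inclusion uses that $\overline{N}^X$ is closed in $X$. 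This exhibits a non-empty open subset of $X$ contained in $\overline{N}^X$, contradicting the assumption that $N$ is nowhere dense in $X$.

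There is essentially no obstacle here; the only subtlety is not confusing the two possible ambient spaces when taking closures, and this is handled by consistently writing $\overline{\cdot}^X$ and $\overline{\cdot}^Y$. The whole argument is two short paragraphs and uses nothing beyond the definitions of meager, nowhere dense, and dense subspace, together with the fact that closure commutes with intersection by a dense subspace in the weak sense used above.
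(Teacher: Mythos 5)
Your argument is correct. The paper does not actually include a proof of this proposition (it states ``We leave to the reader the proofs of the first two''), so there is nothing to compare against; your reduction to the nowhere dense case, together with the observation that a non-empty open $U\subseteq X$ with $U\cap Y\subseteq\overline{N}^X$ forces $U\subseteq\overline{U\cap Y}^X\subseteq\overline{N}^X$, is exactly the standard intended argument and is complete as written.
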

\begin{proposition}\label{nowheremeagerequivalence}
Let $X$ be a space and $S\subseteq X$. Then $S$ is nowhere meager in $X$ if and only if $S$ is dense in $X$ and Baire as a subspace of $X$.
\end{proposition}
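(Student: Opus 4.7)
My plan is to handle the two directions separately, using Proposition \ref{meagersubspace} in one direction and a direct argument about nowhere dense sets in the other.

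For the forward implication, assume $S$ is nowhere meager in $X$. Density of $S$ is immediate: if some non-empty open $U\subseteq X$ were disjoint from $S$, then $S\cap U=\emptyset$ would be meager in $X$, contradicting the hypothesis. For Baireness, given a non-empty open subset $V=U\cap S$ of $S$, I would suppose for contradiction that $V=\bigcup_{n}F_n$ with each $F_n\subseteq S$ nowhere dense in $S$, and then argue that each $F_n$ is in fact nowhere dense in $X$. The key one-line observation is that if $W\subseteq X$ is a non-empty open set contained in $\overline{F_n}^{\,X}$, then density of $S$ gives $\emptyset\neq W\cap S\subseteq \overline{F_n}^{\,X}\cap S=\overline{F_n}^{\,S}$, contradicting $F_n$ being nowhere dense in $S$. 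Hence $V=U\cap S$ is meager in $X$, contradicting nowhere meagerness.

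For the reverse implication, assume $S$ is dense in $X$ and Baire. Fix a non-empty open $U\subseteq X$. Density gives that $U\cap S$ is a non-empty open subset of $S$, and Baireness of $S$ then implies $U\cap S$ is non-meager in $S$. If $U\cap S$ were meager in $X$, then Proposition \ref{meagersubspace} applied with $Y=S$ would yield that $(U\cap S)\cap S=U\cap S$ is meager in $S$, a contradiction. Hence $U\cap S$ is non-meager in $X$, as required.

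The argument is essentially routine, but the only point worth flagging is that Proposition \ref{meagersubspace} transfers meagerness only from the ambient space into the dense subspace, not the other way around; this is why the forward direction cannot simply invoke it symmetrically and instead needs the direct observation relating closures in $S$ and in $X$.
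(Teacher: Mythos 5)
Your argument is correct: both directions are carried out properly, and you rightly flag that Proposition \ref{meagersubspace} only transfers meagerness from the ambient space into a dense subspace, so the forward direction needs the separate observation that a nowhere dense subset of a dense subspace is nowhere dense in the whole space. The paper explicitly leaves the proof of this proposition to the reader, and yours is the standard argument one would expect.
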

\begin{theorem}[Oxtoby]\label{baireproduct}
The product of any family of Baire spaces, each of which has a countable pseudobase, is a Baire space.
\end{theorem}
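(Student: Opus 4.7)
The plan is to show directly that $X = \prod_{\alpha \in A} X_\alpha$ is Baire, by verifying that for every countable family $\{U_n\}_{n \in \omega}$ of dense open subsets of $X$ and every non-empty open $V \subseteq X$, the intersection $V \cap \bigcap_n U_n$ is non-empty. Let $\mathcal{B}_\alpha$ denote the given countable pseudobase of $X_\alpha$, which I may assume is closed under finite intersections. Basic open sets of the form $\prod_\alpha W^\alpha$, with $W^\alpha \in \mathcal{B}_\alpha$ for $\alpha$ in a finite support and $W^\alpha = X_\alpha$ otherwise, form a pseudobase of $X$, so I may assume $V$ has this form with support $F_0 \subseteq A$.

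The body of the argument is a recursive construction of a decreasing chain of non-empty basic open sets $V = V_0 \supseteq V_1 \supseteq V_2 \supseteq \cdots$ with $V_n = \prod_\alpha V_n^\alpha$, finite supports $F_n \supseteq F_{n-1}$, coordinates $V_n^\alpha \in \mathcal{B}_\alpha$ for $\alpha \in F_n$, and $V_n \subseteq U_{n-1}$ for $n \geq 1$. The existence of $V_n$ at each stage follows from the density of $U_{n-1}$ and the pseudobase property. Then $F_\infty := \bigcup_n F_n$ is countable, and any point $x$ satisfying $x_\alpha \in \bigcap_{n \geq n_\alpha} V_n^\alpha$ for every $\alpha \in F_\infty$ (where $n_\alpha$ is the first stage at which $\alpha$ enters the support) lies in $\bigcap_n V_n \subseteq V \cap \bigcap_n U_n$, as required.

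The main obstacle is extracting such a point $x$, since Baireness of a factor $X_\alpha$ does not by itself ensure that an arbitrary decreasing sequence of non-empty open subsets of $X_\alpha$ has non-empty intersection. I will overcome this by refining the recursion in a fusion style: enumerate the pairs $(n, k) \in \omega \times \omega$ bijectively, and at the step indexed by $(n,k)$ both shrink into $U_n$ and, for a coordinate $\alpha$ assigned to this step, commit to a dense open \emph{density task} $E_{\alpha, k} \subseteq V_{n_\alpha}^\alpha$ to which $x_\alpha$ must belong. The tasks are arranged so that each $\alpha \in F_\infty$ receives countably many of them and so that meeting all the tasks in coordinate $\alpha$ forces $x_\alpha \in \bigcap_{n \geq n_\alpha} V_n^\alpha$. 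Baireness of $X_\alpha$ applied to the countable family $\{E_{\alpha, k}\}_k$ of dense open subsets of $V_{n_\alpha}^\alpha$ then yields an admissible $x_\alpha$; choosing such an $x_\alpha$ for each $\alpha \in F_\infty$ and arbitrary coordinates elsewhere produces the required point. The careful coupling between the product recursion and the factor-wise dense open requirements is the technical heart of the argument and the step I expect to be the hardest to make rigorous.
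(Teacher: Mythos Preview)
The paper does not prove this theorem; it simply cites \cite[Theorem 3]{oxtoby}. So there is no in-paper proof to compare against, and I can only evaluate your argument on its own merits.

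Your outline is fine up to the point you yourself flag as the hard step, and there the argument has a genuine gap. You assert that one can attach dense open ``density tasks'' $E_{\alpha,k}\subseteq V_{n_\alpha}^\alpha$ so that $\bigcap_k E_{\alpha,k}\subseteq\bigcap_{n\ge n_\alpha}V_n^\alpha$, and then invoke Baireness of $X_\alpha$ to produce $x_\alpha$ in the intersection. But this would force $\bigcap_{n\ge n_\alpha}V_n^\alpha$ to contain a dense $\Gd$ of $V_{n_\alpha}^\alpha$, and nothing in your recursion ensures that: the sets $V_n^\alpha$ are dictated by the need to get inside $U_{n-1}$, and a decreasing sequence of non-empty open sets in a Baire space can perfectly well have empty intersection (take $V_n^\alpha=(0,1/n)$ in $X_\alpha=\mathbb{R}$). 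If instead you mean the $E_{\alpha,k}$ to be specified during the recursion and to constrain the later $V_n^\alpha$, you have not said what they are, and there is a circularity: the tasks must be fixed before Baireness is applied at the end, yet they are supposed to encode the whole future sequence $(V_n^\alpha)_n$, which depends on choices not yet made. In short, ``fusion'' of this kind works when one has a completeness-type device (shrinking diameters, genericity, etc.); bare Baireness does not supply one.

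What is missing is precisely the extra leverage the countable-pseudobase hypothesis gives. Restricting moves to a countable pseudobase, the Banach--Mazur game on $X_\alpha$ becomes a game on a countably branching tree with closed payoff, hence determined; since $X_\alpha$ is Baire, Player~I (aiming for empty intersection) has no winning strategy, so Player~II has one, say $\sigma_\alpha$. Now your recursion can be repaired: after shrinking $V_{n-1}$ into $U_{n-1}$ to obtain a basic open set, further shrink in each coordinate $\alpha\in F_n$ by one move of $\sigma_\alpha$, treating the $\alpha$-th projections of the previous shrinkings as Player~I's moves. In the limit, for each $\alpha\in F_\infty$ the sequence $(V_n^\alpha)_{n\ge n_\alpha}$ is a run in which Player~II followed $\sigma_\alpha$, so $\bigcap_n V_n^\alpha\neq\varnothing$, and a point can be assembled coordinatewise as you intended. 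This strategy-based mechanism (which is in the spirit of Oxtoby's original argument) is exactly the replacement for your unspecified ``density tasks''.
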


The following lemma is essentially \cite[(2.1)]{oxtoby}, and we will use it in the proof of Lemma \ref{existsgamma}. Given spaces $X$, $Y$, a point $x\in X$ and $S\subseteq X\times Y$, we will use the notation $S[x]=\{y\in Y:(x,y)\in S\}$.
\begin{lemma}[Oxtoby]\label{oxtobylemma}
Let $X$ and $Y$ be spaces such that $Y$ has a countable pseudobase. Assume that $W$ is a dense $\Gd$ subset of $X\times Y$. Then there exists a comeager subset $C$ of $X$ such that $W[x]$ is a dense $\Gd$ in $Y$ whenever $x\in C$.	
\end{lemma}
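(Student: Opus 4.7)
The plan is to write $W = \bigcap_{n \in \omega} W_n$ with each $W_n$ open in $X \times Y$; since $W_n \supseteq W$ automatically, each $W_n$ is dense. Fix a countable pseudobase $\{B_k : k \in \omega\}$ of $Y$ and let $\pi : X \times Y \longrightarrow X$ denote the projection.

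For each pair $(n, k)$, set $U_{n, k} := \pi(W_n \cap (X \times B_k))$. Then $U_{n,k}$ is open in $X$, because $\pi$ is an open map applied to an open set; and $U_{n,k}$ is dense in $X$, because for any nonempty open $V \subseteq X$ the rectangle $V \times B_k$ is a nonempty open subset of $X \times Y$ and must meet the dense $W_n$, producing $x \in V$ with $W_n[x] \cap B_k \neq \emptyset$. Setting $C := \bigcap_{n,k} U_{n,k}$ yields a dense $\Gd$, hence comeager, subset of $X$.

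For every $x \in C$, the slice $W_n[x]$ is open in $Y$ and meets every pseudobase element $B_k$, so by the defining property of a pseudobase it is dense in $Y$, for every $n$. Consequently $W[x] = \bigcap_n W_n[x]$ is $\Gd$ in $Y$, and to check that it is dense it suffices, again by the pseudobase, to verify $W[x] \cap B_k \neq \emptyset$ for every $k$. Fixing $k$, the set $W[x] \cap B_k = \bigcap_n (W_n[x] \cap B_k)$ is a countable intersection of open dense subsets of the open subspace $B_k$, so a Baire category argument inside $B_k$ delivers non-emptiness.

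The heart of the proof is the short pseudobase computation in the middle paragraph, which simultaneously yields openness and density of the $U_{n, k}$ and, via the pseudobase criterion, the density of each slice $W_n[x]$. The only delicate step is the closing invocation of Baire category in the subspace $B_k$, which is legitimate in the settings where this lemma is applied.
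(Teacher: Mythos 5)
Your proof is essentially the paper's: the same decomposition $W=\bigcap_n W_n$ into dense open sets, the same open dense projections $\pi[(X\times B_k)\cap W_n]$, and the same comeager set $C=\bigcap_{n,k}U_{n,k}$; the paper simply stops once each slice $W_n[x]$ is shown to be open dense, in effect reading ``dense $\Gd$'' as ``countable intersection of dense open sets.'' Your closing Baire-category argument inside $B_k$ is the only step that goes beyond this, and your caveat about it is exactly right: it requires $Y$ to be Baire, which is not among the hypotheses and without which the literal conclusion can fail (take $X=2^\omega$, $Y=\mathbb{Q}=\{q_m:m\in\omega\}$, $D=\{d_n:n\in\omega\}$ a countable dense subset of $2^\omega$, and $W=\{(d_n,q_m):n\le m\}$, which is a dense $\Gd$ in $X\times Y$ whose slice over every $x\in 2^\omega\setminus D$ is empty), though it does hold in the lemma's sole application, where $Y=2^{(\omega\setminus\ell)\times\omega}$ is Polish.
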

\begin{proof}
Let $U_n$ for $n\in\omega$ be open dense subsets of $X\times Y$ such that $W=\bigcap_{n\in\omega}U_n$. We will construct a comeager subset $C$ of $X$ such that $U_n[x]$ is open dense in $Y$ for every $x\in C$ and $n\in\omega$. Since $W[x]=\bigcap_{n\in\omega}(U_n[x])$ for all $x\in X$, this will conclude the proof.

Let $\{B_m:m\in\omega\}$ be a pseudobase for $Y$. Let $\pi:X\times Y\longrightarrow X$ be the projection on the first coordinate. Define $V_{m,n}=\pi[(X\times B_m)\cap U_n]$ for $m,n\in\omega$. Observe that each $V_{m,n}$ is an open dense subset of $X$. It is straightforward to check that $C=\bigcap_{m,n\in\omega}V_{m,n}$ is the desired comeager set.
\end{proof}

For the proofs of the following two classical results, see \cite[Theorem 4.3.20]{engelking} and \cite[Theorem 4.3.21]{engelking}.
\begin{lemma}[Lavrentieff]\label{lavrentieffcontinuous}
Let $X$ and $Y$ be spaces, with $Y$ completely metrizable. Assume that $f:A\longrightarrow Y$ is continuous, where $A\subseteq X$. Then there exists a $\Gd$ subset $S$ of $X$ and a continuous function $g:S\longrightarrow Y$ such that $f\subseteq g$.
\end{lemma}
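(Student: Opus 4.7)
The plan is the classical oscillation argument. Fix a complete metric $d$ on $Y$ compatible with its topology, and for each open $U\subseteq X$ set $\mathrm{diam}(U)=\sup\{d(f(a),f(a')):a,a'\in U\cap A\}$, with the convention $\mathrm{diam}(U)=+\infty$ if $U\cap A=\emptyset$. Define the oscillation of $f$ at $x$ by $\omega_f(x)=\inf\{\mathrm{diam}(U):U\text{ open in }X,\ x\in U\}$ for each $x\in X$.

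For each $n\geq 1$, the set $G_n=\{x\in X:\omega_f(x)<1/n\}$ is open directly from the definition of $\omega_f$, so $S=\bigcap_{n\geq 1}G_n=\{x\in X:\omega_f(x)=0\}$ is a $\Gd$ subset of $X$. Continuity of $f$ on $A$ immediately yields $A\subseteq S$. I would then construct $g:S\to Y$ as follows: given $x\in S$, pick a decreasing sequence of open neighborhoods $U_n\ni x$ with $\mathrm{diam}(U_n)<1/n$ (which forces $U_n\cap A\neq\emptyset$ by our convention), choose $a_n\in U_n\cap A$, and set $y_n=f(a_n)$. The sequence $(y_n)$ is $d$-Cauchy by monotonicity of diameter under inclusion, so converges in $Y$ by completeness; a routine check using the oscillation condition shows the limit is independent of the choices, and for $x\in A$ the choice $a_n=x$ gives $g(x)=f(x)$, so that $f\subseteq g$.

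Continuity of $g$ at $x\in S$ is a standard three-epsilon argument: given $\epsilon>0$, pick an open $U\ni x$ with $\mathrm{diam}(U)<\epsilon/3$; for any $x'\in U\cap S$ the defining sequence for $g(x')$ can be chosen inside $U$, so $g(x')$ lies in the $d$-closure of $f(U\cap A)$, and therefore $d(g(x),g(x'))\leq\epsilon$. The only mild obstacle is resisting the temptation to argue sequentially: since $X$ is only assumed to be a topological space, all estimates must be phrased in terms of the neighborhood filter of $x$ rather than sequences converging to $x$, but this does not cause any real difficulty because the relevant quantities (diameters and oscillations) are already indexed by open neighborhoods.
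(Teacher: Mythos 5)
Your oscillation argument is the standard route (the paper itself does not prove this lemma but cites \cite[Theorem 4.3.20]{engelking}), but there is a genuine gap at the step where you assert that the limit defining $g(x)$ ``is independent of the choices.'' That independence is proved by picking a mediating point $b\in U_n\cap U_n'\cap A$ and running the triangle inequality through $f(b)$, which requires $U_n\cap U_n'\cap A\neq\varnothing$; this is guaranteed only when $x\in\cl(A)$. Your convention $\mathrm{diam}(U)=+\infty$ for $U\cap A=\varnothing$ does \emph{not} force $S\subseteq\cl(A)$: the infimum defining $\omega_f(x)$ ranges over \emph{all} open neighborhoods of $x$, and a large neighborhood may meet $A$ in a set of tiny diameter even though some smaller neighborhood misses $A$ entirely. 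Concretely, take $X=\mathbb{R}$, $A=\{0,10\}$, $f(0)=0$, $f(10)=1$, and $x=5$: both $(-1,6)$ and $(4,11)$ have diameter $0$ in your sense, so $\omega_f(5)=0$ and $5\in S$, yet your recipe yields $g(5)=0$ or $g(5)=1$ according to which neighborhoods you pick. The same defect undermines the continuity argument: ``the defining sequence for $g(x')$ can be chosen inside $U$'' again needs the intersected neighborhoods to meet $A$.

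The repair is to replace $S$ by $S\cap\cl(A)$; then every neighborhood of every point of $S$ meets $A$, and all of your estimates (well-definedness, $f\subseteq g$, the three-epsilon continuity check) go through verbatim. The price is that $\cl(A)$ must itself be a $\Gd$ in $X$, which is automatic when $X$ is metrizable --- the only case the paper ever uses, namely $X=2^{\omega\times\omega}$ --- and this is exactly where a hypothesis on $X$ genuinely enters. Indeed, for a literally arbitrary space $X$ the statement fails: if $X$ is an uncountable set with the co-countable topology and $A$ is a two-point subset (which is discrete, so every $f:A\longrightarrow Y$ is continuous), then every $\Gd$ set containing $A$ is co-countable and again carries the co-countable topology, on which every continuous map to a metric space is constant, so an injective $f$ admits no extension. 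You should therefore state and prove the lemma for metrizable $X$ (or for $A$ dense in $X$), as in Engelking.
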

\begin{lemma}[Lavrentieff]\label{lavrentieffhomeomorphism}
Let $X$ and $Y$ be completely metrizable spaces. Assume that $f:A\longrightarrow B$ is a homeomorphism, where $A\subseteq X$ and $B\subseteq Y$. Then there exist $\Gd$ subsets $S$ of $X$ and $T$ of $Y$, and a homeomorphism $g:S\longrightarrow T$ such that $f\subseteq g$.
\end{lemma}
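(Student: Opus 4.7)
The plan is to reduce the statement to Lavrentieff's theorem for continuous functions (Lemma \ref{lavrentieffcontinuous}), applied twice: once to $f$ and once to $f^{-1}$. The challenge is that the two resulting continuous extensions will a priori have no reason to be inverses of each other, so the key technical step is to carve out $\Gd$ subsets $S\subseteq X$ and $T\subseteq Y$ (still containing $A$ and $B$) on which the extensions genuinely invert one another.

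First I would apply Lemma \ref{lavrentieffcontinuous} to the continuous map $f:A\longrightarrow Y$ to obtain a $\Gd$ set $P\subseteq X$ with $A\subseteq P$ and a continuous extension $g_1:P\longrightarrow Y$ of $f$. Applying the same lemma to $f^{-1}:B\longrightarrow X$ yields a $\Gd$ set $Q\subseteq Y$ with $B\subseteq Q$ and a continuous extension $h_1:Q\longrightarrow X$ of $f^{-1}$. Then $P':=g_1^{-1}(Q)$ and $Q':=h_1^{-1}(P)$ are $\Gd$ in $X$ and $Y$ respectively (as preimages of $\Gd$ sets under continuous maps into completely metrizable spaces), and they still contain $A$ and $B$.

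Next I would define
$$
S=\{x\in P':h_1(g_1(x))=x\}\quad\text{and}\quad T=\{y\in Q':g_1(h_1(y))=y\}.
$$
Since $X$ and $Y$ are Hausdorff, the equalizer of two continuous functions is closed in its common domain, so $S$ is closed in $P'$ (hence $\Gd$ in $X$) and $T$ is closed in $Q'$ (hence $\Gd$ in $Y$). Both sets contain $A$ and $B$ respectively because $g_1$ and $h_1$ restrict to $f$ and $f^{-1}$ there.

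Finally, I would verify that $g:=g_1\!\re\! S$ is the desired homeomorphism. For $x\in S$, setting $y=g_1(x)$ one has $h_1(y)=x\in P$, so $y\in Q'$, and $g_1(h_1(y))=g_1(x)=y$, which shows $y\in T$; thus $g$ maps $S$ into $T$. By the symmetric argument $h_1\!\re\! T$ maps $T$ into $S$, and by construction these restrictions are mutually inverse continuous maps, so $g:S\longrightarrow T$ is a homeomorphism with $f\subseteq g$. The only real obstacle is ensuring the $\Gd$-ness of $S$ and $T$, which is handled cleanly by the Hausdorff equalizer argument above; everything else is a straightforward bookkeeping of the two extensions.
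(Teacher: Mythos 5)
Your proof is correct, and it is essentially the classical argument: the paper does not prove this lemma itself but cites \cite[Theorem 4.3.21]{engelking}, where the proof proceeds exactly as you describe — two applications of the continuous extension theorem followed by restriction to the equalizer sets on which the two extensions invert each other. The only cosmetic remark is that the $\Gd$-ness of $S$ and $T$ uses that closed subsets of metrizable spaces are $\Gd$ (closedness in $P'$ alone is what the Hausdorff equalizer argument gives), but you have all the pieces in place.
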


We will always denote by $M$ a countable transitive model of (a sufficiently large fragment) of $\ZFC$.
Given a $\Fin(I\times\omega,2)$-generic filter $G$ over $M$, in the forcing extension $M[G]$, we will denote by $c_\alpha:\omega\longrightarrow 2$ for $\alpha\in I$ the $\alpha$-th Cohen real, that is $c_\alpha(j)=(\bigcup G)(\alpha,j)$ for each $\alpha\in I$ and $j\in\omega$.
The most important case will be when $I=\omega_1$. In fact, throughout this paper, we will use the following notation: 
\begin{itemize}
\item $\PPP=\Fin(\omega_1\times\omega,2)$,
\item $Z=\{c_\alpha:\alpha\in\omega_1\}$,
\item $Z_\gamma=\{c_\alpha:\alpha\in\gamma\}$ for $\gamma\in\omega_1$,
\item $M_\gamma=M[G\cap\Fin(\gamma\times\omega,2)]$ for $\gamma\in\omega_1$,
\item $M[c_\alpha:\alpha\in J]=M[G\cap\Fin(J\times\omega,2)]$ for $J\subseteq\omega_1$.
\end{itemize}

The following two results are well-known. We will not give the proof of the first, as it is just a simpler version of the proof of Lemma \ref{getintocomeagerinfinite}.
\begin{proposition}\label{getintocomeager}
Let $W$ be a dense $\Gd$ subset of $2^\omega$ coded in $M$.
Let $I\in M$ be a non-empty set, and force with $\Fin(I\times\omega,2)$. Let $\alpha\in I$. Then $\mathbb{1}\Vdash\text{``}c_\alpha\in W\text{''}$.
\end{proposition}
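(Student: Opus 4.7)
The plan is to reduce to the classical density argument for Cohen forcing applied at a single coordinate, then iterate over the countably many open dense sets whose intersection is $W$.

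Working in $M$, I would first fix a representation $W=\bigcap_{n\in\omega}U_n$, where each $U_n$ is an open dense subset of $2^\omega$ of the form $U_n=\bigcup_{s\in S_n}[s]$ for some $S_n\in M$. Absoluteness of Borel codes for open sets tells us that the reinterpretation of $U_n$ in $M[G]$ is again $\bigcup_{s\in S_n}[s]$; in particular, the statement ``$c_\alpha\in W$'' in $M[G]$ amounts to the assertion that for every $n$ there exists $s\in S_n$ with $s\subseteq c_\alpha$. It therefore suffices to prove that for each fixed $n\in\omega$, the set of conditions forcing $c_\alpha\in U_n$ is dense in $\PPP$.

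For the density argument, fix $n\in\omega$ and an arbitrary $p\in\PPP$. Define $s_p\in\Fin(\omega,2)$ by $s_p(j)=p(\alpha,j)$ whenever $(\alpha,j)\in\dom(p)$. Since $U_n$ is dense open and $[s_p]$ is a nonempty basic clopen subset of $2^\omega$, inside $M$ we can find $t\in\Fin(\omega,2)$ with $s_p\subseteq t$ and $[t]\subseteq U_n$. Extend $p$ to a condition $q\leq p$ by putting $q(\alpha,j)=t(j)$ for each $j\in\dom(t)\setminus\dom(s_p)$ (and leaving $q$ equal to $p$ elsewhere). Then $q$ forces $c_\alpha\re\dom(t)=t$, hence $q\Vdash c_\alpha\in[t]\subseteq U_n$. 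Since $p$ was arbitrary, $\mathbb{1}\Vdash c_\alpha\in U_n$ for every $n$, and so $\mathbb{1}\Vdash c_\alpha\in W$.

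There is no genuine obstacle here: this is simply the standard fact that a Cohen real meets every dense open set coded in the ground model, lightly repackaged by restricting attention to the $\alpha$-th column of $I\times\omega$. The only points requiring any care are the absoluteness between $M$ and $M[G]$ of the $G_\delta$ presentation $W=\bigcap_n U_n$ and of the inclusions $[t]\subseteq U_n$, and both reduce to absoluteness of membership in a basic clopen set. As the authors indicate, the substantive work is deferred to the generalization given in Lemma \ref{getintocomeagerinfinite}.
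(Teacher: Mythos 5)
Your proof is correct and is essentially the argument the paper intends: the authors omit the proof of Proposition \ref{getintocomeager}, stating it is a simpler version of the proof of Lemma \ref{getintocomeagerinfinite}, and your density argument (restrict $p$ to the $\alpha$-th column, extend into a basic clopen subset of $U_n$, pull back to a condition $q\leq p$) is exactly that specialization, merely phrased directly rather than by contradiction.
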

\begin{corollary}\label{nowheremeager}
The set $Z$ is nowhere meager in $2^\omega$. 
\end{corollary}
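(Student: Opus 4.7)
I plan to argue by contradiction: suppose $U \subseteq 2^\omega$ is non-empty open with $Z \cap U$ meager in $2^\omega$. Shrinking $U$, I may assume $U = [s]$ for some $s \in \Fin(\omega,2)$, and fix closed nowhere dense sets $F_n \subseteq 2^\omega$ in $M[G]$ with $Z \cap [s] \subseteq \bigcup_{n \in \omega} F_n$. The goal is to produce $\alpha \in \omega_1$ with $c_\alpha \in [s] \setminus \bigcup_n F_n$; since $c_\alpha \in Z \cap [s] \subseteq Z \cap U$, this contradicts $Z \cap U \subseteq \bigcup_n F_n$.

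The key move is to push the meager set into an intermediate Cohen extension. Since $\PPP$ is ccc, any nice $\PPP$-name for a real---in particular, a Borel code for $\bigcup_n F_n$---uses only countably many antichains of countable size, so the set of coordinates in $\omega_1$ it mentions is a countable $J \in M$. Hence a Borel code for $\bigcup_n F_n$ belongs to $M[c_\beta : \beta \in J]$, and within that intermediate model the complement $W = 2^\omega \setminus \bigcup_n F_n$ is a dense $\Gd$.

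Next I locate an $\alpha \in \omega_1 \setminus J$ with $c_\alpha \in [s]$. For each $\beta \in \omega_1$, the set of $p \in \PPP$ that contain the finite partial function $\{((\alpha,j), s(j)) : j \in \dom(s)\}$ for some $\alpha > \beta$ is dense in $\PPP$, since any given condition mentions only finitely many ordinals and can be extended by choosing a fresh $\alpha > \beta$. By genericity, $\{\alpha \in \omega_1 : c_\alpha \in [s]\}$ is cofinal in $\omega_1$, and in particular meets $\omega_1 \setminus J$.

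Finally, viewing the remainder of $\PPP$ as $\Fin((\omega_1 \setminus J) \times \omega, 2)$ over the ground model $M[c_\beta : \beta \in J]$, each $c_\alpha$ with $\alpha \in \omega_1 \setminus J$ is a Cohen real over this intermediate model. Applying Proposition \ref{getintocomeager} to the dense $\Gd$ set $W$ coded in $M[c_\beta : \beta \in J]$ then yields $c_\alpha \in W$, completing the contradiction. The main delicate point is the first step---properly arguing that a Borel code for $\bigcup_n F_n$ lives in $M[c_\beta : \beta \in J]$ for a countable $J \in M$---while the rest is just a short density argument combined with one application of Proposition \ref{getintocomeager}.
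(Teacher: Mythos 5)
Your proof is correct and follows essentially the same route as the paper's: localize a Borel code for the meager cover of $Z\cap[s]$ to an intermediate Cohen extension (the paper uses $M_\gamma$ where you use $M[c_\beta:\beta\in J]$), then apply Proposition \ref{getintocomeager} over that intermediate model together with a genericity argument producing a later $\alpha$ with $c_\alpha\in[s]$. The only differences are presentational — you make the ccc/nice-name step explicit and argue cofinality of $\{\alpha : c_\alpha\in[s]\}$ directly, where the paper phrases the last step as extending a condition $p$ forcing the negation — but the key tool and structure are identical.
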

\begin{proof}
Assume, in order to get a contradiction, that $Z\cap [s]$ is meager in $2^\omega$ for some $s\in\Fin(\omega,2)$. Let $W$ be a dense $\Gd$ subset of $2^\omega$ such that $W\subseteq (2^\omega\setminus [s])\cup ([s]\setminus Z)$. Fix $\gamma\in\omega_1$ such that $W$ is coded in $M_\gamma$. By applying Proposition \ref{getintocomeager} with $M=M_\gamma$ and $I=\omega_1\setminus\gamma$, one sees that $Z\cap [s]\subseteq Z_\gamma$. So let $p\in\PPP$ be such that
$$
p\Vdash\text{``}c_\alpha\notin [s]\text{ whenever }\gamma\leq\alpha<\omega_1\text{''}.
$$
Now fix $\delta\geq\gamma$ such that $(\{\delta\}\times\omega)\cap\dom(p)=\varnothing$, then define $q\in\PPP$ as follows:
\begin{itemize}
\item $\dom(q)=\dom(p)\cup(\{\delta\}\times\dom(s))$,
\item $q(\delta,j)=s(j)$ for every $j\in\dom(s)$,
\item $q(\alpha,j)=p(\alpha,j)$ for every $(\alpha,j)\in\dom(p)$.
\end{itemize}
It is clear that $q\leq p$. On the other hand, $q\Vdash\text{``}c_\delta\in [s]\text{''}$, which is contradiction.
\end{proof}

The following lemma is the ``$\omega$-th power'' of Proposition \ref{getintocomeager}, and it will be needed in the proof of Lemma \ref{existsgamma}.
\begin{lemma}\label{getintocomeagerinfinite}
Let $W$ be a dense $\Gd$ subset of $2^{\omega\times\omega}$ coded in $M$. Let $I\in M$ be an infinite set, and force with $\Fin(I\times\omega,2)$. Let $\langle\alpha_i:i\in\omega\rangle\in M$ be an injective sequence of elements of $I$. Then $\mathbb{1}\Vdash\text{``}\langle c_{\alpha_i}:i\in\omega\rangle\in W\text{''}$.
\end{lemma}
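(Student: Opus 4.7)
The plan is to carry out the standard density argument, which is essentially the $\omega$-fold version of (the unstated proof of) Proposition \ref{getintocomeager}. Working in $M$, write $W=\bigcap_{n\in\omega}U_n$ where each $U_n$ is a dense open subset of $2^{\omega\times\omega}$ presented as $U_n=\bigcup_{k\in\omega}[s_{n,k}]$ for some sequence $\langle s_{n,k}:k\in\omega\rangle\in M$ of elements of $\Fin(\omega\times\omega,2)$; such a presentation exists in $M$ since the Borel code for $W$ lies in $M$. By a routine genericity argument it suffices to show that for every $p\in\Fin(I\times\omega,2)$ and every $n\in\omega$, the set of conditions forcing $\langle c_{\alpha_i}:i\in\omega\rangle\in U_n$ is dense below $p$.

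Fix such $p$ and $n$. Using the injectivity of $\langle\alpha_i:i\in\omega\rangle$, define $t\in\Fin(\omega\times\omega,2)$ by
$$
\dom(t)=\{(i,j):(\alpha_i,j)\in\dom(p)\},\qquad t(i,j)=p(\alpha_i,j);
$$
this is unambiguous precisely because the $\alpha_i$'s are pairwise distinct. Since $U_n$ is dense open in $2^{\omega\times\omega}$ and $[t]$ is a non-empty basic open set, there exists $s\in\Fin(\omega\times\omega,2)$ with $t\subseteq s$ and $[s]\subseteq U_n$. Now extend $p$ to $q\in\PPP$ by setting $q(\alpha_i,j)=s(i,j)$ for every $(i,j)\in\dom(s)\setminus\dom(t)$, and $q(\alpha,j)=p(\alpha,j)$ for every $(\alpha,j)\in\dom(p)$. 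The new assignments involve only coordinates disjoint from $\dom(p)$ (by the very definition of $t$), and distinct pairs $(i,j)$ land on distinct coordinates $(\alpha_i,j)$ (by injectivity), so $q$ is a well-defined condition with $q\leq p$. By construction, $q\Vdash c_{\alpha_i}(j)=s(i,j)$ for every $(i,j)\in\dom(s)$, hence $q\Vdash\langle c_{\alpha_i}:i\in\omega\rangle\in[s]\subseteq U_n$.

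The only point requiring care is absoluteness between $M$ and $M[G]$: the statements ``$U_n$ is dense open'' and ``$[s]\subseteq U_n$'', when expressed in terms of the code $\langle s_{n,k}:k\in\omega\rangle$, are $\Pi^0_2$ and therefore absolute, so the density chosen in $M$ transfers to $M[G]$ and the inclusion $[s]\subseteq U_n$ used above is genuinely forced. There is no real obstacle; the only delicate point in the whole argument is the bookkeeping in the previous paragraph, where one must use injectivity of $\langle\alpha_i\rangle$ both to define $t$ without conflict and to verify that the finite extension $q\setminus p$ does not accidentally contradict $p$ on any coordinate.
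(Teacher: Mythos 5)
Your proof is correct and follows essentially the same route as the paper's: pull back the finite condition $p$ along the injective sequence $\langle\alpha_i\rangle$ to a finite partial function on $\omega\times\omega$, extend it into the dense open set $U_n$, and push the extension back to a condition $q\leq p$ forcing membership in a basic clopen subset of $U_n$ (the paper phrases this as a proof by contradiction and swaps the roles of the letters $s$ and $t$, but the argument is identical). Your added remark on the absoluteness of ``$[s]\subseteq U_n$'' is a point the paper leaves implicit.
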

\begin{proof}
Let $U_n$ for $n\in\omega$ be dense open subsets of $2^{\omega\times\omega}$ such that the sequence of their codes belongs to $M$ and $W=\bigcap_{n\in\omega}U_n$. Assume, in order to get a contradiction, that there exists $p\in\Fin(I\times\omega,2)$ and $n\in\omega$ such that $p\Vdash\text{``}\langle c_{\alpha_i}:i\in\omega\rangle\notin U_n\text{''}$.
Define $s\in\Fin(\omega\times\omega,2)$ as follows:
\begin{itemize}
\item $\dom(s)=\{(i,j)\in\omega\times\omega:(\alpha_i,j)\in\dom(p)\}$,
\item $s(i,j)=p(\alpha_i,j)$ for every $(i,j)\in\dom(s)$.
\end{itemize}
Since $U_n$ is open dense, it is possible to find $t\in\Fin(\omega\times\omega,2)$ such that $s\subseteq t$ and $[t]\subseteq U_n$.
Finally, define $q\in\Fin(I\times\omega,2)$ as follows:
\begin{itemize}
\item $\dom(q)=\dom(p)\cup\{(\alpha_i,j):(i,j)\in\dom(t)\setminus\dom(s)\}$,
\item $q(\alpha_i,j)=t(i,j)$ for every $(i,j)\in\dom(t)\setminus\dom(s)$,
\item $q(\alpha,j)=p(\alpha,j)$ for every $(\alpha,j)\in\dom(p)$.
\end{itemize}
It is clear that $q\leq p$. On the other hand, $q\Vdash\text{``}\langle c_{\alpha_i}:i\in\omega\rangle\in [t]\text{''}$, which is a contradiction.
\end{proof}

Recall that a space $X$ is \emph{rigid} if the only homeomorphism $f:X\longrightarrow X$ is the identity (see \cite{medinivanmillzdomskyy} for several references on this topic). The following proposition will not be needed in the rest of the paper, but we decided to keep it, as its proof is particularly simple and it provides a good warm-up for the case of $Z^\omega$. In fact, as our discussion in Section 1 shows, Theorem \ref{main} can be seen as a ``rigidity-type'' result. 
\begin{proposition}
The space $Z$ is rigid. 
\end{proposition}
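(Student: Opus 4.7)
The plan is to extend $f$ to a Borel-coded partial homeomorphism of $2^\omega$ and then exploit the mutual genericity of distinct Cohen reals. So let $f:Z\longrightarrow Z$ be a homeomorphism. By Lemma \ref{lavrentieffhomeomorphism}, $f$ admits an extension to a homeomorphism $g:S\longrightarrow T$ between $\Gd$ subsets of $2^\omega$. Since every real in $M[G]$ is already coded in $M_\gamma$ for some $\gamma<\omega_1$ (by a standard nice-name argument, using that each condition in $\PPP$ has finite support), we may fix one such $\gamma$ large enough that Borel codes for $g$, $S$, and $T$ all belong to $M_\gamma$.

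The first step is to show that $f(c_\alpha)=c_\alpha$ for every $\alpha\geq\gamma$. Since $g(c_\alpha)=f(c_\alpha)\in Z$, write $g(c_\alpha)=c_\beta$ for some $\beta<\omega_1$. If $\beta<\gamma$, then $c_\beta\in M_\gamma$, which yields $c_\alpha=g^{-1}(c_\beta)\in M_\gamma$; this contradicts that $c_\alpha$ is Cohen over $M_\gamma$ (via the factorization $\PPP\cong\Fin(\gamma\times\omega,2)\times\Fin((\omega_1\setminus\gamma)\times\omega,2)$). If instead $\beta\geq\gamma$ and $\beta\neq\alpha$, then $c_\beta=g(c_\alpha)\in M_\gamma[c_\alpha]$, contradicting the fact that $c_\beta$ is Cohen over $M_\gamma[c_\alpha]$ (via the analogous factorization separating the $\alpha$-th coordinate from the rest). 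Hence $\beta=\alpha$.

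It remains to handle the countably many indices $\alpha<\gamma$. The fixed-point set $F=\{x\in S:g(x)=x\}$ is closed in $S$ and, by the previous step, contains $\{c_\alpha:\gamma\leq\alpha<\omega_1\}$. Running the argument of Corollary \ref{nowheremeager} with $M_\gamma$ in place of $M$ and $\Fin((\omega_1\setminus\gamma)\times\omega,2)$ in place of $\PPP$ shows that this latter set is nowhere meager, hence dense, in $2^\omega$. Since $S$ is itself dense in $2^\omega$ (it contains the dense set $Z$), $F$ is dense in $S$, so $F=S$. In particular $f(c_\alpha)=g(c_\alpha)=c_\alpha$ for every $\alpha\in\omega_1$, and $f$ is the identity.

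The heart of the proof is the case analysis in the middle paragraph, and the only potential obstacle is confirming the mutual-genericity statements used there, namely $c_\alpha\notin M_\gamma$ and $c_\beta\notin M_\gamma[c_\alpha]$ whenever $\alpha\neq\beta$ are above $\gamma$. Both follow immediately from the standard product decomposition of $\PPP$, so beyond bookkeeping I anticipate no genuine difficulty.
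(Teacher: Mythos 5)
Your proof is correct and follows essentially the same route as the paper: extend $f$ via Lavrentieff to a homeomorphism $g$ coded in some $M_\gamma$, use mutual genericity to force $g(c_\alpha)=c_\alpha$ for $\alpha\geq\gamma$, and finish by density plus continuity. The only (harmless) difference is that where the paper first proves $g\re Z_\gamma$ is a bijection of $Z_\gamma$ and then invokes injectivity to rule out $\beta<\gamma$, you apply $g^{-1}$ directly to conclude $c_\alpha\in M_\gamma$ — the same underlying absoluteness fact.
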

\begin{proof}
Let $f:Z\longrightarrow Z$ be a homeomorphism. We will show that there exists $\gamma\in\omega_1$ such that $f(c_\alpha)=c_\alpha$ for every $\alpha\geq\gamma$. In particular, by Corollary \ref{nowheremeager}, the function $f$ will be the identity on a dense subset of $Z$. Since $f$ is continuous, this will conclude the proof.

By Lemma \ref{lavrentieffhomeomorphism}, there exist $\Gd$ subsets $S$ and $T$ of $2^\omega$, and a homeomorphism $g:S\longrightarrow T$ such that $f\subseteq g$.
Notice that $g$ is a closed subset of $S\times T$, hence a Borel (in fact, $\Gd$) subset of $2^\omega\times 2^\omega$. So we can fix $\gamma\in\omega_1$ such that $g$ is coded in $M_\gamma$. We will show that $\gamma$ is as desired.

First we claim that $g\re Z_\gamma:Z_\gamma\longrightarrow Z_\gamma$ is a bijection. If $\alpha <\gamma$ then $c_\alpha\in M_\gamma$, hence $g(c_\alpha)\in M_\gamma$, while on the other hand $g(c_\alpha)=f(c_\alpha)\in Z$. Since $M_\gamma\cap Z=Z_\gamma$, this shows that $g[Z_\gamma]\subseteq Z_\gamma$. But $g^{-1}$ is also coded in $M_\gamma$, and a similar argument shows that $g^{-1}[Z_\gamma]\subseteq Z_\gamma$. This concludes the proof of the claim.

Now pick $\alpha\geq\gamma$. Since $c_\alpha\in M_\gamma[c_\alpha]$, it is clear that $g(c_\alpha)\in M_\gamma[c_\alpha]$. On the other hand $g(c_\alpha)=f(c_\alpha)\in Z$, hence $g(c_\alpha)=c_\beta$ for some $\beta\in\gamma\cup\{\alpha\}$. If we had $\beta\in\gamma$ then, by our claim, the injectivity of $g$ would be contradicted. Therefore, we must have $\beta=\alpha$.
\end{proof}

\section{Every homeomorphism is a permutation of the coordinates on a comeager set}

Given a function $f:Z^\omega\longrightarrow Z^\omega$ and $\gamma\in\omega_1$, define
$$
A(f,\gamma)=\{x\in Z^\omega:f(x)\in (Z_\gamma\cup\{x(i):i\in\omega\})^\omega\}.
$$
\begin{lemma}\label{existsgamma}
Let $f:Z^\omega\longrightarrow Z^\omega$ be a continuous function. Then there exists $\gamma\in\omega_1$ such that $A(f,\gamma)$ is comeager in $Z^\omega$.
\end{lemma}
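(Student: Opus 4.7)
The plan is to extend $f$ to a Borel function coded in some $M_\gamma$, identify a comeager $B \subseteq Z^\omega$ whose points have well-behaved coordinates relative to $\gamma$, and then deduce $B \subseteq A(f,\gamma)$ via a product-forcing argument.

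First I would apply Lemma \ref{lavrentieffcontinuous} to extend $f$ to a continuous $g : S \to 2^{\omega\times\omega}$, where $S$ is a $\Gd$ subset of $2^{\omega\times\omega}$ containing $Z^\omega$. Being Borel, $g$ is coded by a real in $M[G]$; since $\PPP$ is ccc, any such real belongs to $M[c_\alpha : \alpha\in J]$ for some countable $J \subseteq \omega_1$, so I would pick $\gamma\in\omega_1$ with $J\subseteq\gamma$, guaranteeing that the code of $g$ lies in $M_\gamma$. Then let $B$ consist of those $x\in Z^\omega$ whose coordinates, written $x(i)=c_{\alpha_i(x)}$, are pairwise distinct and satisfy $\alpha_i(x)\geq\gamma$ for every $i$. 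To see $B$ is comeager, note that $Z$ has no isolated points (it is dense in $2^\omega$ and Baire by Corollary \ref{nowheremeager} together with Proposition \ref{nowheremeagerequivalence}), so the countable set $Z_\gamma$ is meager in $Z$, making each $\{x\in Z^\omega : x(i)\in Z_\gamma\}$ meager in $Z^\omega$; similarly, each diagonal $\{x : x(i)=x(j)\}$ with $i\neq j$ is nowhere dense.

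It then remains to verify $B\subseteq A(f,\gamma)$. For $x\in B$, set $N = M_\gamma[c_{\alpha_i(x)} : i\in\omega]$; since the code of $g$ lies in $M_\gamma\subseteq N$ and $x\in N$, we get $f(x)=g(x)\in N$. Writing $f(x)(i)=c_{\beta_i}$, the required conclusion $f(x)(i)\in Z_\gamma\cup\{x(j):j\in\omega\}$ fails only if $\beta_i\geq\gamma$ and $\beta_i\notin\{\alpha_j(x):j\in\omega\}$. In that case, factoring $\PPP$ over $M$ as $\Fin((\omega_1\setminus\{\beta_i\})\times\omega,2)\times\Fin(\{\beta_i\}\times\omega,2)$ shows that $c_{\beta_i}$ is Cohen-generic over $M[c_\alpha : \alpha\in\omega_1\setminus\{\beta_i\}]$, a model containing $N$, contradicting $c_{\beta_i}\in N$.

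The main obstacle will be this last forcing-theoretic step: I must carefully use the product structure of $\PPP$ in $M$ to conclude that $c_{\beta_i}$ is genuinely Cohen-generic over $N$, even though $N$ depends on the countable set of indices $\{\alpha_j(x):j\in\omega\}$ that, together with $\beta_i$, lies in $M[G]$ rather than in $M_\gamma$. The point is that for each fixed ordinal $\beta\in\omega_1=\omega_1^M$, the decomposition is available in $M$, and the factor of a product generic over $M$ corresponding to $\omega_1\setminus\{\beta\}$ yields a model over which $c_\beta$ is still a Cohen real; applying this with $\beta=\beta_i$ and using monotonicity of genericity to pass to the submodel $N$ closes the argument.
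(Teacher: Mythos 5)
Your setup is fine as far as it goes: the Lavrentieff extension, the choice of $\gamma$ via ccc-ness so that $g$ is coded in $M_\gamma$, and the comeagerness of $B$ in $Z^\omega$ are all correct (and the first two steps coincide with the paper's). The gap is the inclusion $B\subseteq A(f,\gamma)$, and it occurs exactly at the point you flag as ``the main obstacle.'' For an arbitrary $x\in B$ the index sequence $\langle\alpha_i(x):i\in\omega\rangle$ need not belong to $M$ (ccc-ness gives only a countable cover in $M$, not membership), so $N=M_\gamma[c_{\alpha_i(x)}:i\in\omega]$ is not a generic extension of $M_\gamma$ and there is no reason for $x$ to be an element of it; and if you instead pass to $M[G\cap\Fin(J\times\omega,2)]$ for a countable $J\in M$ covering the indices, the conclusion you get ($f(x)(i)\in\{c_\alpha:\alpha\in\gamma\cup J\}$) is too weak. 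The attempted repair then fails outright: it is simply not true that $x\in M[c_\alpha:\alpha\in\omega_1\setminus\{\beta\}]$ whenever $\beta\notin\gamma\cup\{\alpha_j(x):j\in\omega\}$, because $x$ is the \emph{function} $i\mapsto c_{\alpha_i}$, and the order in which it enumerates its coordinates can encode $c_\beta$. Concretely, let $\{\alpha_{2i},\alpha_{2i+1}\}=\{\delta+2i,\delta+2i+1\}$ for some $\delta>\beta\geq\gamma$, ordered so that $c_{\alpha_{2i}}$ lexicographically precedes $c_{\alpha_{2i+1}}$ if and only if $c_\beta(i)=1$. Then $x\in B$ and every coordinate of $x$ lies in $M[c_\alpha:\alpha\neq\beta]$, but $c_\beta$ is computable from $x$ alone, so $x\notin M[c_\alpha:\alpha\neq\beta]$. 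Moreover the map $u$ reading off these comparison bits is a continuous, $M$-coded function on a $\Gd$ set with $u(x)=c_\beta$, so $z\mapsto(u(z),z(0),z(1),\dots)$ is a continuous $M_\gamma$-coded $g$ with $g(x)\in Z^\omega$ but $g(x)(0)\notin Z_\gamma\cup\{x(i):i\in\omega\}$. Since your argument uses nothing about $g$ beyond these properties, it cannot be correct as stated: any valid proof must somewhere exploit that $f$ is defined on \emph{all} of $Z^\omega$.

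That is precisely what the paper's proof does, and why it establishes only comeagerness of $A(f,\gamma)$ rather than a pointwise statement on a set like $B$. It argues by contradiction: if the relevant Borel set $A$ is not comeager, it is meager on some basic clopen set $[s]$, and then Oxtoby's Lemma \ref{oxtobylemma} together with Lemma \ref{getintocomeagerinfinite} produces a single witness $z=\langle c_{\alpha_i}:i\in\omega\rangle\in[s]\setminus A$ whose index sequence is deliberately arranged to lie in $M$ (finitely many $\alpha_i$ chosen using the nowhere meagerness of $Z^\ell$, and the tail defined as $\zeta+i$). For that special $z$ the intermediate-model computation you want is legitimate, $g(z)=f(z)\in Z^\omega$ by totality of $f$, and the contradiction follows. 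So your final step needs to be replaced by a Baire-category argument of this kind; the pointwise claim $B\subseteq A(f,\gamma)$ is not something your method can deliver.
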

\begin{proof}
By Lemma \ref{lavrentieffcontinuous}, there exists a $\Gd$ subset $S$ of $2^{\omega\times\omega}$ and a continuous function $g:S\longrightarrow 2^{\omega\times\omega}$ such that $f\subseteq g$. Fix $\gamma\in\omega_1$ such that $g$ is coded in $M_\gamma$. We claim that $\gamma$ is as desired.

In fact, we will show that the set
$$
A=\{x\in S:g(x)\in (Z_\gamma\cup\{x(i):i\in\omega\})^\omega\}
$$
is comeager in $2^{\omega\times\omega}$. Assume, in order to get a contradiction, that $A$ is non-comeager. As $Z_\gamma$ is countable and $g$ is continuous, it is easy to check that $A$ is Borel. In particular, it has the Baire property (see \cite[Corollary A.13.9]{vanmill}), hence there exists $\ell\in\omega$ and $s:\ell\times\ell\longrightarrow 2$ such that $A\cap [s]$ is meager.

Define $X=2^{\ell\times\omega}$ and $Y=2^{(\omega\setminus\ell)\times\omega}$. Identify $X\times Y$ with $2^{\omega\times\omega}$, and let $W$ be a dense $\Gd$ subset of $X\times Y$ such that $W\subseteq (2^{\omega\times\omega}\setminus [s])\cup([s]\setminus A)$. Since $S$ is comeager in $2^{\omega\times\omega}$, we can also assume that $W\subseteq S$. An application of Lemma \ref{oxtobylemma} yields a comeager subset $C$ of $X$ such that $W[x]$ is a dense $\Gd$ subset of $Y$ whenever $x\in C$. Since $Z^\ell$ is nowhere meager in $2^{\ell\times\omega}$ by Corollary \ref{nowheremeager}, we can fix a sequence $\langle\alpha_i:i\in\ell\rangle$ with $\alpha_i<\omega_1$ for each $i\in\ell$, such that $s\subseteq x$ and $x\in C$, where $x=\langle c_{\alpha_i}:i\in\ell\rangle$. Let $\delta\geq\gamma$ be such that $W$ is coded in $M_\delta$, and notice that $W[x]$ is coded in $M[c_\alpha:\alpha\in\delta\cup\{\alpha_i:i\in\ell\}]$. Therefore, if we fix $\zeta>\supi(\delta\cup\{\alpha_i:i\in\ell\})$ and define $\alpha_i=\zeta+i$ for $i\in\omega\setminus\ell$, an application of Lemma \ref{getintocomeagerinfinite} will show that $y\in W[x]$, where $y=\langle c_{\alpha_i}:i\in\omega\setminus\ell\rangle$. It follows that $z\in W$, where $z=\langle c_{\alpha_i}:i\in\omega\rangle$. Furthermore, the fact that $s\subseteq x$ shows that $z\in [s]$, hence $z\notin A$.

The only thing to observe about the sequence $\langle\alpha_i:i\in\omega\rangle$ is that it belongs to $M$ (thanks to its simple definition), hence $z\in M[c_{\alpha_i}:i\in\omega]\subseteq M_\gamma[c_{\alpha_i}:i\in\omega]$. It follows that $g(z)\in M_\gamma[c_{\alpha_i}:i\in\omega]$. On the other hand, $M_\gamma[c_{\alpha_i}:i\in\omega]\cap (Z^\omega)\subseteq (Z_\gamma\cup\{c_{\alpha_i}:i\in\omega\})^\omega$, which contradicts the fact that $z\notin A$.
\end{proof}

\begin{lemma}\label{existsgammaimpliespermutation}
Let $f:Z^\omega\longrightarrow Z^\omega$ be a homeomorphism. Assume that $\gamma\in\omega_1$ is such that $A(f,\gamma)$ and $A(f^{-1},\gamma)$ are both comeager in $Z^\omega$. Then there exists a comeager subset $C$ of $Z^\omega$ such that for every $z\in C$ there exists a permutation $h_z:\omega\longrightarrow\omega$ such that $f(z)(i)=z(h_z(i))$ for every $i\in\omega$.
\end{lemma}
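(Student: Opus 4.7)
My plan is to define $C$ as the intersection of several comeager subsets of $Z^\omega$ that together force the relation $f(z)(i) = z(h_z(i))$ to determine a bijection $h_z$. The hypothesized comeager sets $A(f,\gamma)$ and $A(f^{-1},\gamma)$ say that, generically, $f$ and $f^{-1}$ send a point to one whose coordinates either lie in $Z_\gamma$ or are reused from the original point; I will discard the $Z_\gamma$ alternative by also requiring that no coordinate of $z$ or of $f(z)$ lands in $Z_\gamma$, and I will force ``reuse'' to be injective on both sides.

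First I verify two basic category facts. Since $Z$ is nowhere meager in $2^\omega$ (Corollary \ref{nowheremeager}), it is dense in the perfect space $2^\omega$ and hence has no isolated points, so every singleton $\{c_\alpha\}$ is nowhere dense in $Z$. Consequently
\[
E = \{z \in Z^\omega : z(i) \notin Z_\gamma \text{ for every } i \in \omega\}
\]
is comeager, being the complement of the countable union $\bigcup_{i \in \omega,\, \alpha < \gamma}\{z : z(i) = c_\alpha\}$ of nowhere dense sets. A similar argument, using that each two-coordinate diagonal is nowhere dense in $Z \times Z$, shows that $F = \{z \in Z^\omega : z(i) \neq z(j) \text{ whenever } i \neq j\}$ is comeager. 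Since $f$ is a self-homeomorphism of $Z^\omega$ and hence preserves comeager sets,
\[
C = A(f,\gamma) \cap f^{-1}[A(f^{-1},\gamma)] \cap E \cap f^{-1}[E] \cap F \cap f^{-1}[F]
\]
is comeager in $Z^\omega$.

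Next I define $h_z$ for $z \in C$ and verify it is a permutation. For each $i \in \omega$, the condition $z \in A(f,\gamma)$ gives $f(z)(i) \in Z_\gamma \cup \{z(k) : k \in \omega\}$, the condition $z \in f^{-1}[E]$ rules out the first alternative, and $z \in F$ makes the resulting $k$ with $f(z)(i) = z(k)$ unique; set $h_z(i) = k$. Injectivity of $h_z$ follows from $z \in f^{-1}[F]$: if $h_z(i) = h_z(i')$ then $f(z)(i) = f(z)(i')$, whence $i = i'$. For surjectivity, fix $j \in \omega$; the condition $z \in f^{-1}[A(f^{-1},\gamma)]$ unfolds to $z = f^{-1}(f(z)) \in (Z_\gamma \cup \{f(z)(i) : i \in \omega\})^\omega$, and $z \in E$ forbids $z(j) \in Z_\gamma$, so $z(j) = f(z)(i) = z(h_z(i))$ for some $i$, and $z \in F$ then gives $j = h_z(i)$.

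The argument is essentially bookkeeping once the right comeager sets are identified; the only conceptual subtlety is recognizing that the ``inverse'' conditions coming from $A(f^{-1},\gamma)$ and the distinctness of image coordinates must be pulled back through $f$, which is legitimate precisely because $f$ is a homeomorphism of $Z^\omega$ and therefore preserves Baire category.
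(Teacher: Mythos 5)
Your proof is correct and follows essentially the same route as the paper: both intersect $A(f,\gamma)$ with suitable pullbacks under $f$ of the conditions excluding $Z_\gamma$-coordinates and repeated coordinates, then read off $h_z$ from the ``reuse'' alternative. The only cosmetic difference is that you verify injectivity and surjectivity of $h_z$ directly, whereas the paper exhibits the explicit inverse $k_{f(z)}$ coming from $A(f^{-1},\gamma)$; the content is the same.
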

\begin{proof}
Define
$$
B=A(f,\gamma)\cap A(f^{-1},\gamma)\cap\bigcap_{i\in\omega}\{z\in Z^\omega:z(i)\notin Z_\gamma\}\cap\bigcap_{i\in\omega}\{z\in Z^\omega:f(z)(i)\notin Z_\gamma\},
$$
and notice that $B$ is comeager in $Z^\omega$, because $Z_\gamma$ is countable and $f$ is a homeomorphism. Furthermore, the definition of $A(f,\gamma)$ easily implies that 
$$
\{f(z),f^{-1}(z)\}\subseteq\{z(i):i\in\omega\}^\omega
$$
for every $z\in B$.

Let $\Delta=\{z\in Z^\omega:z(i)=z(j)\text{ for some }i\neq j\}$. Then, for every $z\in B\setminus\Delta$, there will be a unique function $h_z:\omega\longrightarrow\omega$ such that $f(z)(i)=z(h_z(i))$ for every $i\in\omega$. Similarly, for every $z\in B\setminus\Delta$, there will be a unique function $k_z:\omega\longrightarrow\omega$ such that $f^{-1}(z)(i)=z(k_z(i))$ for every $i\in\omega$.

Finally, define
$$
C=(B\setminus\Delta)\cap f^{-1}[B\setminus\Delta],
$$
and observe that $C$ is comeager in $Z^\omega$. For any fixed $z\in C$, it is clear that $\{z,f(z)\}\subseteq B\setminus\Delta$, and it is straightforward to check that $k_{f(z)}$ is the inverse function of $h_z$. In particular, $h_z$ is a bijection, which concludes the proof.
\end{proof}

\section{The permutation varies continuously}

In this section, we will prove that Condition $(\ref{continuous})$ of Theorem \ref{main} holds. We will use the same notation as in the previous section.
More precisely, we assume that a homeomorphism $f:Z^\omega\longrightarrow Z^\omega$ is given, and let $g:S\longrightarrow T$ be a homeomorphism between $\Gd$ subsets of $2^{\omega\times\omega}$ such that $f\subseteq g$, whose existence is guaranteed by Lemma \ref{lavrentieffhomeomorphism}.
Then, we let $\gamma\in\omega_1$ be such that $g$ (hence $g^{-1}$ as well) are coded in $M_\gamma$. As in the proof of Lemma \ref{existsgamma}, this will guarantee that $A(f,\gamma)$ and $A(f^{-1},\gamma)$ are comeager in $Z^\omega$.
So, defining $C$ as in the proof of Lemma \ref{existsgammaimpliespermutation} will guarantee that Condition $(\ref{permutation})$ of Theorem \ref{main} holds.

We will show that for all $n\in\omega$ and $x\in C$ there exists $s\in\Fin(\omega\times\omega,2)$ such that $s\subseteq x$ and $h_y(n)=h_x(n)$ whenever $y\in C\cap [s]$. So fix $n\in\omega$ and $x\in C$, say $x(i)=c_{\alpha_i}$ for $i\in\omega$, where $\gamma\leq\alpha_i<\omega_1$ for each $i$. From now on we will treat $M_\gamma$ as our ground model. So $M[G]=M_\gamma[H]$ for some $\QQQ$-generic filter $H$ over $M_\gamma$, where $\QQQ=\Fin((\omega_1\setminus\gamma)\times\omega,2)$.

Let $p\in\QQQ$ and $m\in\omega$ be such that 
$$
p\Vdash\text{``}g(x)(n)=x(m)\text{''}.
$$
Now define $s\in\Fin(\omega\times\omega,2)$ as follows:
\begin{itemize}
\item $\dom(s)=\{(i,j)\in\omega\times\omega:(\alpha_i,j)\in\dom(p)\}$,
\item $s(i,j)=p(\alpha_i,j)$ for every $(i,j)\in\dom(s)$.
\end{itemize}
We will show that $h_y(n)=m$ whenever $y\in C\cap [s]$.
In order to get a contradiction, assume that there exist $y\in C\cap [s]$ and $k\in\omega$ such that $g(y)(n,k)=f(y)(n,k)\neq y(m,k)$. Let $\varepsilon=y(m,k)$. Then, by the continuity of $g$, there exists $\ell\in\omega$ such that $g(z)(n,k)\neq\varepsilon$ whenever $z\in [y\re (\ell\times\ell)]\cap S$. Without loss of generality, assume that $\{m,k\}\subseteq\ell$ and $\dom(s)\subseteq\ell\times\ell$. Let $t=y\re (\ell\times\ell)$, and observe that $s\subseteq t$.

Next, we claim that
\begin{itemize}
\item[$\circledast$]$\mathbb{1}_{\QQQ}\Vdash\text{``}g(z)(n,k)\neq \varepsilon$ whenever $z\in [t]\cap S\text{''}$.
\end{itemize}
Let $\varphi$ be the formula in quotes, and let $N$ be an arbitrary generic extension of $M_\gamma$ obtained by forcing with $\QQQ$. In order to conclude the proof of the claim, it will be enough to show that $\varphi$ holds in $N$. Obviously $M_\gamma[H]\vDash\varphi$, and in particular $g(z)(n,k)=1-\varepsilon$ for every $z\in [t]\cap S\cap M_\gamma$.
So the set $[t]\cap S\cap M_\gamma$ witnesses that $N\vDash\text{``}g(z)(n,k)=1-\varepsilon\text{ for a dense set of }z\in [t]\cap S\text{''}$. Since $g$ is continuous, it follows that $N\vDash\varphi$.

Finally, define $q\in\QQQ$ as follows:
\begin{itemize}
\item $\dom(q)=\dom(p)\cup\{(\alpha_i,j):(i,j)\in\ell\times\ell\}$,
\item $q(\alpha_i,j)=t(i,j)$ for every $(i,j)\in (\ell\times\ell)$ such that $(\alpha_i,j)\notin\dom(p)$,
\item $q(\alpha,j)=p(\alpha,j)$ for every $(\alpha,j)\in\dom(p)$.
\end{itemize}
It is clear that $q\leq p$. Furthermore, using the fact that $s\subseteq t$, it is easy to check that $q\Vdash\text{``}x\re (\ell\times\ell)=t\text{''}$. On the other hand, since $q\leq p$, we see that $q\Vdash\text{``}g(x)(n,k)=x(m,k)\text{''}$. This contradicts $\circledast$.

\section{A problem of Terada}

A space $X$ is \emph{h-homogeneous} if every non-empty clopen subspace of $X$ is homeomorphic to $X$. This notion has been studied by several authors, both ``instrumentally'' and for its own sake (see for example the references in \cite{medinih}). The following proposition is well-known (see for example \cite[Proposition 3.32 and Figure 3.33]{medinit}), and it explains why h-homogeneous spaces are sometimes called \emph{strongly homogeneous}.

\begin{proposition}\label{folklore}
Let $X$ be a first-countable zero-dimensional space. If $X$ is h-homogeneous then $X$ is homogeneous.
\end{proposition}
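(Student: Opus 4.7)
The plan is the standard ``onion-peeling'' construction for homogeneity. Given $x,y\in X$, I want to produce two sequences of nested clopen neighborhoods shrinking to $\{x\}$ and $\{y\}$, cut them into annuli, match up annuli of the same level by means of h-homogeneity, and paste the pieces together with the rule $x\mapsto y$. Continuity at $x$ will drop out of the fact that the annuli accumulate to $x$ (and symmetrically at $y$).

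First, I would dispose of the trivial case $|X|=1$ and assume $|X|>1$. Then $X$ has no isolated point: if $\{p\}$ were clopen, h-homogeneity would force $X\cong\{p\}$, contradicting $|X|>1$. Using first-countability, zero-dimensionality, and the absence of isolated points, I would show that for each $p\in X$ there is a \emph{strictly} decreasing sequence $X=U_0\supsetneq U_1\supsetneq U_2\supsetneq\cdots$ of clopen neighborhoods of $p$ with $\bigcap_{n\in\omega}U_n=\{p\}$. This is a straightforward recursion: fix a countable clopen neighborhood base $\{W_n:n\in\omega\}$ at $p$; having chosen $U_n$, pick some $z\in U_n\setminus\{p\}$ (possible since $p$ is not isolated) and a clopen neighborhood $U_{n+1}$ of $p$ with $U_{n+1}\subseteq (U_n\cap W_{n+1})\setminus\{z\}$.

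Apply this to $x$ and to $y$ to obtain $\{U_n\}$ and $\{V_n\}$. Set $A_n=U_n\setminus U_{n+1}$ and $B_n=V_n\setminus V_{n+1}$; these are non-empty clopen subsets of $X$ by strict decrease, and they satisfy $X\setminus\{x\}=\bigsqcup_{n\in\omega}A_n$ and $X\setminus\{y\}=\bigsqcup_{n\in\omega}B_n$. By h-homogeneity, each $A_n$ and each $B_n$ is homeomorphic to $X$, so fix a homeomorphism $f_n:A_n\longrightarrow B_n$ for each $n\in\omega$. Define $f:X\longrightarrow X$ by $f(x)=y$ and $f\upharpoonright A_n=f_n$ for all $n$.

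It remains to verify that $f$ is a homeomorphism. Bijectivity is clear. On $X\setminus\{x\}$, the map $f$ is continuous because the $A_n$ are clopen and each $f_n$ is continuous into the clopen $B_n$. Continuity at $x$ follows from the identity $f^{-1}(V_n)=U_n$, as $\{V_n\}$ is a neighborhood base at $y$; the symmetric argument, applied to the family $\{f_n^{-1}\}$, yields continuity of $f^{-1}$. The one mildly delicate point of the whole argument is ensuring that the annuli $A_n$ are non-empty, i.e.\ that the $U_n$ can be taken strictly decreasing, and this is exactly where the absence of isolated points---itself forced by h-homogeneity together with $|X|>1$---is used.
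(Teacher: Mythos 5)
Your proof is correct and is precisely the standard annulus-matching argument that the paper invokes by citation (it refers to \cite[Proposition 3.32 and Figure 3.33]{medinit} rather than writing out a proof). The one delicate point --- non-emptiness of the annuli, secured by the absence of isolated points, which h-homogeneity forces once $|X|>1$ --- is handled correctly, as is the verification of continuity at $x$ via $f^{-1}(V_n)=U_n$.
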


The following question from \cite{terada} remains open (even in the separable metrizable case), and it was the original motivation for our research. In fact, Theorem \ref{main} was born out of an attempt to construct a counterexample to it. Notice that, by Proposition \ref{folklore}, an affirmative answer to Question \ref{teradaquestion} would yield a strengthening of Theorem \ref{dowpearlmain}.
\begin{question}[Terada]\label{teradaquestion}
Is $X^\omega$ h-homogeneous for every zero-dimensional first-countable space $X$?
\end{question}

Next, we list a few partial results on Question \ref{teradaquestion}. The following theorem is due independently to van Engelen and Medvedev (see \cite[Theorems 4.2 and 4.4]{vanengelen} or \cite[proof of Theorem 25]{medvedev}).\footnote{\,Medvedev assumes $\Ind(X)=0$, but it is well-known that $\dime(X)=\Ind(X)$ for every metrizable space $X$ (see \cite[Theorem 7.3.2]{engelking}).}
Recall that the assumption $\dime(X)=0$ on a metrizable space $X$ is in general stronger than the assumption that $X$ is zero-dimensional (see \cite{roy}). However, these two assumptions are equivalent if $X$ is also separable (see \cite[Theorem 7.3.3]{engelking}).
\begin{theorem}[van Engelen, Medvedev]\label{meagerdensecomplete}
	Let $X$ be a metrizable space such that $\dime(X)=0$. Assume that either $X$ is meager or $X$ has a dense completely metrizable subspace. Then $X^\omega$ is h-homogeneous.
\end{theorem}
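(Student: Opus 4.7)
The plan is to show that every non-empty clopen subset $U$ of $X^\omega$ is homeomorphic to $X^\omega$, by reducing this to the following \emph{key claim}: for every non-empty clopen $C \subseteq X$, $C \times X^\omega \cong X^\omega$.

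Granting the key claim, I would argue as follows. Any non-empty clopen $U \subseteq X^\omega$ decomposes as a countable (in the non-separable case, $\sigma$-discrete) disjoint union $\bigsqcup_n V_n \times X^\omega$, where each $V_n$ is a product $C_n^1 \times \cdots \times C_n^{k_n}$ of non-empty clopens of $X$; this uses that $X$ is zero-dimensional, so that the basic clopen products form a base of $X^\omega$, together with the Lindel\"of (or, more generally, paracompact) property. Iterating the key claim then gives $V_n \times X^\omega \cong X^\omega$. Moreover, partitioning $X$ into countably many non-empty clopens (which is possible whenever $X$ is infinite; the finite case reduces to the classical h-homogeneity of $2^\omega$) and applying the key claim once more yields $X^\omega \cong \bigsqcup_n X^\omega$, so that $U \cong \bigsqcup_n X^\omega \cong X^\omega$.

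For the key claim $C \times X^\omega \cong X^\omega$, I would construct the homeomorphism via a back-and-forth scheme. Set $C' = X \setminus C$, so that $X^\omega = (C \times X^\omega) \sqcup (C' \times X^\omega)$; the task is to absorb the second piece into the first. I would build two trees of clopen partitions indexed by $\omega^{<\omega}$, one refining $C \times X^\omega$ and one refining $X^\omega$, with corresponding cells in bijection at each level, diameters shrinking to $0$, and the product structure of $X^\omega$ coherently preserved. In the meager case, write $X = \bigcup_n F_n$ with each $F_n$ nowhere dense closed, and choose the refinements to interact with these sets in a controlled way so that the inverse limit of the scheme is exactly the required homeomorphism. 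In the dense completely metrizable case, arrange the scheme to produce, in the limit, a homeomorphism between suitable dense completely metrizable subspaces of $C \times X^\omega$ and $X^\omega$ (built from the dense Polish subspace $Y \subseteq X$), then extend via Lavrentieff's theorem (Lemma~\ref{lavrentieffhomeomorphism}) to $\Gd$ envelopes and restrict back.

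The main obstacle is the back-and-forth construction itself. At each stage one must refine the two clopen partitions so as to simultaneously shrink diameters, exhaust the two spaces, and maintain the product-respecting structure needed for continuity of the inverse limit. The meagerness or dense completeness hypothesis is precisely what makes these requirements mutually compatible: the scaffolding of nowhere dense closed sets $F_n$ (respectively, the completeness of $Y$) guides the refinements so that the scheme's endpoint sets coincide with the spaces to be matched. Careful bookkeeping of the product coordinates is the technical heart.
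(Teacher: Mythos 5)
First, a point of comparison: the paper does not prove this theorem at all; it is quoted from van Engelen and Medvedev (\cite[Theorems 4.2 and 4.4]{vanengelen}, \cite[proof of Theorem 25]{medvedev}), so there is no internal proof to measure you against. Your reduction of the h-homogeneity of $X^\omega$ to the key claim $C\times X^\omega\cong X^\omega$ is in the right spirit (it is essentially the standard absorption scheme), but as written it has two defects. It is not clear that an arbitrary clopen $U\subseteq X^\omega$ decomposes into a disjoint union of basic boxes $V_n\times X^\omega$: a disjoint clopen refinement of a cover by boxes consists of clopen \emph{subsets} of boxes, not boxes. You should instead argue only that $U$ \emph{contains} a clopen box, hence (by the key claim) a clopen copy of $X^\omega$, and then invoke the Schr\"oder--Bernstein-type absorption lemma: $A\oplus B\cong A$ whenever $A\cong\bigoplus_{n\in\omega}A$ and $B$ embeds as a clopen subspace of $A$. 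Also, partitioning $X$ into infinitely many non-empty clopen sets is impossible exactly when $X$ is compact, not merely when $X$ is finite; the compact infinite case (where $X^\omega$ is the Cantor set) must be handled separately, since there $X^\omega\not\cong\bigsqcup_{n}X^\omega$.

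The genuine gap is the key claim itself. Observe that ``$C\times X^\omega\cong X^\omega$ for every non-empty clopen $C\subseteq X$'' is essentially equivalent to the conclusion of the theorem, and your reduction uses the hypotheses nowhere; so the entire content lies in the back-and-forth, which you describe only programmatically. The difficulty your sketch does not address is this: when nested clopen cells of $C\times X^\omega$ are matched with nested clopen cells of $X^\omega$ with diameters shrinking to $0$, the intersection of a decreasing chain may be non-empty on one side and empty on the other, since neither space is complete; one must exhibit a mechanism forcing exactly the points of the two given spaces to survive in the limit, and must explain why corresponding cells admit compatible refinements at every stage. Saying that meagerness or dense completeness ``guides the refinements'' names the problem rather than solving it. In the dense completely metrizable case your use of Lavrentieff is moreover flawed: a homeomorphism between dense Polish subspaces, extended to $\Gd$ envelopes via Lemma \ref{lavrentieffhomeomorphism}, has no reason to map $C\times X^\omega$ \emph{onto} $X^\omega$ upon restriction. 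Controlling where the non-Polish points are sent is precisely the delicate part of the Lawrence/Dow--Pearl-type constructions (compare the clauses $f(z)(i)\in D$ for $i\notin\dom(h_z)$ in Theorem \ref{dowpearlprecise}), and it is absent from your outline.
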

\begin{corollary}
	Assume that $X$ belongs to the $\sigma$-algebra generated by the analytic subsets of $2^\omega$. Then $X^\omega$ is h-homogeneous.
\end{corollary}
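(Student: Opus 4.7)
The plan is to apply Theorem~\ref{meagerdensecomplete} after a dichotomy based on the Baire property of $X$. It is classical that every member of the $\sigma$-algebra generated by the analytic subsets of a Polish space has the Baire property; since this class is closed under continuous preimages and countable intersections, both $X\subseteq 2^\omega$ and $X^\omega=\bigcap_n\pi_n^{-1}(X)\subseteq(2^\omega)^\omega$ (where $\pi_n$ denotes the $n$-th coordinate projection) have this property. Replacing $2^\omega$ with $\cl(X)$ if necessary, I may further assume that $X$ is dense in $2^\omega$.

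The dichotomy is: either $X$ is comeager in $2^\omega$ or it is not. In the first case, $X$ contains a dense $\Gd$ subset $G$ of $2^\omega$; by Alexandroff's theorem $G$ is completely metrizable, and it is dense in $X$ since it is dense in $2^\omega\supseteq X$. Theorem~\ref{meagerdensecomplete} applied to $X$ then yields that $X^\omega$ is h-homogeneous.

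In the second case, I claim that $X^\omega$ is meager in $(2^\omega)^\omega$. Indeed, if not, then by the Baire property dichotomy $X^\omega$ is comeager in some basic clopen $[t]$ of $(2^\omega)^\omega$, and such a $[t]$ restricts only finitely many coordinates, say those in a finite set $F\subseteq\omega$. A Kuratowski--Ulam style argument ($\prod_i A_i$ is comeager in $\prod_i Y_i$ iff each $A_i$ is comeager in $Y_i$, which follows from the fact that projections onto finite subproducts are continuous open maps and that countable intersections preserve comeagerness) applied to $X^\omega\cap[t]\subseteq[t]$ would force $X$ to be comeager in $2^\omega$ on each of the infinitely many unconstrained coordinates $i\in\omega\setminus F$, contradicting the hypothesis of this case. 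Granting the claim, $X^\omega$ is dense in $(2^\omega)^\omega$ (as $X$ is dense in $2^\omega$), so Proposition~\ref{meagersubspace} gives that $X^\omega$ is meager in itself; Theorem~\ref{meagerdensecomplete} applied to the zero-dimensional separable metrizable space $X^\omega$ shows that $(X^\omega)^\omega$ is h-homogeneous, and since $(X^\omega)^\omega\cong X^\omega$ via any bijection $\omega\times\omega\to\omega$, we conclude that $X^\omega$ is h-homogeneous. The one delicate point is the countable-product Kuratowski--Ulam step, but since $\omega\setminus F$ is infinite, the contradiction is forced by any single unconstrained coordinate.
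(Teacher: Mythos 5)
Your proof is correct and follows essentially the same route as the paper: both reduce to Theorem~\ref{meagerdensecomplete} via the dichotomy ``$X$ has a dense completely metrizable subspace, or else $X^\omega$ is meager in itself'' together with the homeomorphism $(X^\omega)^\omega\cong X^\omega$. The only difference is that you derive the dichotomy directly from the Baire property of $X$ (localization in $\cl(X)$ plus a Kuratowski--Ulam argument for the infinite power), whereas the paper outsources exactly these two facts to \cite{medinic}.
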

\begin{proof}
By \cite[Propositions 3.4 and 3.3]{medinic}, it follows that either $X$ has a dense completely metrizable subspace or $X$ is not Baire. In the first case, $X^\omega$ will have a completely metrizable dense subspace as well.
In the second case, it is easy to see that $X^\omega$ will be meager (see for example \cite[proof of Proposition 4.4]{medinic}). The proof is concluded by observing that $(X^\omega)^\omega$ is homeomorphic to $X^\omega$.
\end{proof}

The following result, which first appeared as \cite[Corollary 29]{medinih}, shows that the additional requirements in Theorem \ref{meagerdensecomplete} are not necessary, provided that $X$ is ``big'' enough.
\begin{theorem}[Medini]
	Let $X$ be a metrizable space such that $\dime(X)=0$. Assume that $X$ is non-separable. Then $X^\omega$ is h-homogeneous.
\end{theorem}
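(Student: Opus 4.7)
The plan is to show that every non-empty clopen subset of $X^\omega$ is homeomorphic to $X^\omega$, by reducing via the product structure to an application of Theorem~\ref{meagerdensecomplete}. Set $\kappa = w(X)$; non-separability forces $\kappa \geq \omega_1$, so $X^\omega$ is itself a non-separable metrizable zero-dimensional space of weight $\kappa$ with no isolated points. The first move is to refine an arbitrary non-empty clopen $U \subseteq X^\omega$ into a pairwise-disjoint family $U = \bigsqcup_{\alpha \in A} (V_\alpha \times X^\omega)$ of basic clopen cylinders, where each $V_\alpha$ is a non-empty clopen subset of some finite power $X^{n_\alpha}$ and $|A| \leq \kappa$. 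This is possible because such cylinders form a base for the product topology and $X^\omega$ has weight $\kappa$.

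The second step is to show that each cylinder $V_\alpha \times X^\omega$ is homeomorphic to $X^\omega$. Using that $X$ is non-separable metrizable and zero-dimensional, it admits a $\sigma$-discrete clopen base of cardinality $\kappa$, which can be exploited to partition $X$ into clopen pieces, each of which is either meager in itself or contains a dense completely metrizable subspace. Feeding these pieces into Theorem~\ref{meagerdensecomplete} (applied after taking $\omega$-powers) together with the identity $V_\alpha \times X^\omega \cong V_\alpha \times X^\omega \times X^\omega$, and absorbing the extra factor via a back-and-forth argument inside the $\sigma$-discrete base, produces the desired homeomorphism $V_\alpha \times X^\omega \cong X^\omega$.

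Finally, the disjoint union of at most $\kappa$-many copies of $X^\omega$ is homeomorphic to $X^\omega$ itself, because $X^\omega$ admits a clopen partition of size $\kappa$ inherited from a $\sigma$-discrete clopen base on $X$, and each block of this partition can be absorbed into a single copy of $X^\omega$ using the same back-and-forth technique. Combining this with the previous step yields $U \cong X^\omega$, as desired. The main obstacle is the clopen refinement used in the second step: the standard ZFC decomposition of a metric space into a meager $F_\sigma$ and a Baire $G_\delta$ part is not generally clopen, and upgrading it to a clopen partition to which Theorem~\ref{meagerdensecomplete} applies piecewise is exactly where the assumption $\kappa \geq \omega_1$ becomes essential, since this is what supplies the $\sigma$-discrete clopen base needed for the refinement and eliminates the Baire-category pathologies that obstruct the separable case.
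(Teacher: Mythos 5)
There is a genuine gap, and it sits at the heart of your argument. Note first that the paper does not reprove this statement: it quotes it from \cite[Corollary 29]{medinih}, and its own framing (see the discussion around Theorem \ref{divisible}) makes clear that the intended mechanism is divisibility by $2$ of $X^\omega$, with non-separability entering only through the existence of an uncountable discrete family of non-empty clopen subsets of $X$. Your proposal instead tries to localize Theorem \ref{meagerdensecomplete}, and that cannot work. The fatal step is the claim that $X$ ``admits a partition into clopen pieces, each of which is either meager in itself or contains a dense completely metrizable subspace.'' No such partition exists in general. Let $B\subseteq 2^\omega$ be a Bernstein set and let $X=B\times D(\omega_1)$, where $D(\omega_1)$ is the discrete space of size $\omega_1$; this $X$ is non-separable, metrizable, and satisfies $\dime(X)=0$, so the theorem applies to it. But every non-empty clopen subset of $X$ contains a non-empty relatively clopen piece of a copy of $B$, and such a piece is neither meager in itself (otherwise the corresponding basic clopen subset of $2^\omega$ would contain a perfect set disjoint from $B$) nor does it have a dense completely metrizable subspace (such a subspace would be a crowded Polish space and would therefore contain a Cantor set inside $B$). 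So no clopen refinement, however fine, produces pieces to which Theorem \ref{meagerdensecomplete} applies. This is not a repairable detail: the paper introduces the present theorem precisely because the hypotheses of Theorem \ref{meagerdensecomplete} are not always available, even locally, and the appeal to non-separability at the end of your proposal does not ``eliminate the Baire-category pathologies''---crossing a pathological separable space with a large discrete space preserves them while destroying separability.

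The remaining steps are also shaky. Disjointifying a cover of a clopen set by basic cylinders does not yield a partition into basic cylinders, since subtracting cylinders with larger finite supports destroys the cylinder form, so your first reduction needs an actual argument. The ``back-and-forth argument inside the $\sigma$-discrete base'' that is supposed to absorb extra factors is never described, and it is exactly where the mathematical content would have to live. Finally, the absorption of $\kappa$ many clopen copies of $X^\omega$ in your last step presupposes that each cylinder is already known to be homeomorphic to $X^\omega$, i.e., it presupposes the unproved second step. The route you should take is the one the paper signposts: by Theorem \ref{divisible} it suffices to show that $X^\omega$ is divisible by $2$, equivalently to exhibit a clopen $C\subseteq X^\omega$ and a homeomorphism of $X^\omega$ interchanging $C$ with its complement, and this is what an uncountable discrete family of non-empty clopen subsets of $X$ (which exists because a non-separable metrizable space with $\dime(X)=0$ has an uncountable $\sigma$-discrete clopen base) is used for.
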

The following result is a particular case of \cite[Theorem 18]{medinih}, which generalizes results of Motorov and Terada.
\begin{theorem}[Medini]
	Let $X$ be a Tychonoff space such that the isolated points are dense. Then $X^\omega$ is h-homogeneous.
\end{theorem}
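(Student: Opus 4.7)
The plan is to show every non-empty clopen subspace $U$ of $X^\omega$ is homeomorphic to $X^\omega$. The case $|X|=1$ is trivial, and if $X$ is finite with $|X|\geq 2$ then $X$ is discrete and $X^\omega\cong 2^\omega$ is classically h-homogeneous. So assume $X$ is infinite, whence the density of the isolated points forces $X$ to contain infinitely many of them (a finite open dense subset would equal $X$). In particular, every non-empty open subset of $X$ contains a clopen singleton. The first step is then to observe that every non-empty clopen $U\subseteq X^\omega$ contains a clopen copy of $X^\omega$: $U$ contains a basic open set $V_0\times\cdots\times V_{n-1}\times X^{\omega\setminus n}$, and picking an isolated $p_i\in V_i$ for each $i<n$ gives a clopen subset $B'=\{p_0\}\times\cdots\times\{p_{n-1}\}\times X^{\omega\setminus n}\cong X^\omega$ contained in $U$.

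The remaining content is an \emph{absorption lemma}: for every clopen $V\subseteq X^\omega$, $X^\omega\sqcup V\cong X^\omega$. Given this, decomposing $U=B'\sqcup (U\setminus B')$ and applying absorption with $V=U\setminus B'$ yields $U\cong X^\omega\sqcup V\cong X^\omega$, completing the proof. To prove the absorption lemma, one exploits the self-similarity of $X^\omega$ coming from the isolated points of $X$: distinct isolated points $p_0,p_1,\ldots$ yield pairwise disjoint clopen copies $\{p_n\}\times X^\omega$ of $X^\omega$ sitting inside $X\times X^\omega\cong X^\omega$. A careful bookkeeping argument, using the cancellation identities $\omega\sqcup\{\ast\}\cong\omega$ and $\omega\times\omega\cong\omega$ for the discrete countable space, establishes $X^\omega\cong\omega\times X^\omega$, after which an Eilenberg-swindle-style shift homeomorphism sends the extra copy of $V$ to a clopen copy of $V$ sitting in $\{0\}\times X^\omega$, slides each $\{n\}\times V$ to $\{n+1\}\times V$, and leaves the complementary clopen pieces $\{n\}\times (X^\omega\setminus V)$ in place.

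The main obstacle is the compact case. If $X^\omega$ is compact (for instance when $X=\omega+1$, where $X^\omega\cong 2^\omega$), then the identification $X^\omega\cong\omega\times X^\omega$ fails, since compactness is incompatible with decomposition into countably many non-trivial clopen summands. In this regime one instead falls back on Brouwer's characterization: $X^\omega$ is compact, zero-dimensional, and perfect (the latter because $|X|\geq 2$ precludes isolated points in $X^\omega$), so in the metrizable subcase $X^\omega\cong 2^\omega$ and h-homogeneity is classical. The fully general compact Tychonoff subcase requires a Stone-duality-style argument that the Boolean algebra of clopen subsets of $X^\omega$ is homogeneous, which is where the bulk of the technical work would lie in a uniform treatment.
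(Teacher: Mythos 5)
The paper itself contains no proof of this statement: it is quoted as a special case of \cite[Theorem 18]{medinih}, so the only thing to assess is whether your argument actually works. Your first step and your reduction are fine: isolated points give clopen singletons, hence every non-empty clopen $U\subseteq X^\omega$ contains a clopen copy $B'$ of $X^\omega$, and $U\cong X^\omega\oplus(U\setminus B')$, so everything rests on the absorption lemma. The gap is in your proof of absorption. You derive it from $X^\omega\cong\omega\times X^\omega$ and locate the only obstruction to that identity in compactness of $X^\omega$. That dichotomy is false: the real obstruction is the existence of an infinite partition of $X^\omega$ into non-empty clopen pieces, which fails whenever $X^\omega$ is merely pseudocompact, and there are non-compact Tychonoff spaces with dense isolated points whose $\omega$-th power is pseudocompact. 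The cleanest example is $X=\omega_1$ with the order topology: the successors form a dense set of isolated points, $X$ is not compact, but $X$ is $\omega$-bounded, hence so is $X^\omega$, hence $X^\omega$ is countably compact; a countably compact space admits no infinite clopen partition (pick one point from each piece to get an infinite closed discrete set), so $\omega_1^\omega\not\cong\omega\times\omega_1^\omega$, while $\omega_1^\omega$ is not compact either, so neither branch of your argument applies. The underlying error in the ``bookkeeping'' is that the clopen copies $\{p_n\}\times X^{\omega\setminus 1}$ are pairwise disjoint, but their union is clopen only if $\{p_n:n\in\omega\}$ is closed discrete in $X$; when the isolated points accumulate (as they must in a countably compact $X$), you never obtain a clopen copy of $\omega\times X^\omega$ inside $X^\omega$.

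The compact branch is also not in order. Brouwer's theorem requires $X^\omega$ to be zero-dimensional, and a Tychonoff space with dense isolated points need not be: for instance $X=\bigl(\{0\}\times[0,1]\bigr)\cup\{(1/n,\,k2^{-n}):n\geq 1,\ 0\leq k\leq 2^n\}\subseteq\mathbb{R}^2$ is compact metrizable with dense isolated points and contains an arc, so $X^\omega$ contains an arc and is certainly not $2^\omega$. And you explicitly leave the compact non-metrizable case as future work. So as written the proposal covers only those $X$ for which $X^\omega$ splits into countably many clopen copies of itself, a proper subclass of the theorem's hypothesis. The machinery of \cite{medinih} (going back to Motorov and Terada) is multiplicative rather than additive for exactly this reason: it exploits divisibility relations such as $X^\omega\cong B\times X^\omega$ for basic clopen $B$, together with finite clopen decompositions, rather than an Eilenberg swindle, and is therefore insensitive to (pseudo)compactness. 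To repair your argument you would need to replace the swindle by an absorption argument of that multiplicative kind.
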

The following result follows immediately from \cite[Proposition 24 and Lemma 22]{medinih}. Recall that a space $X$ is \emph{divisible} by $2$ if there exists a space $Y$ such that $X=Y\times 2$, where $2$ is the discrete space with two elements.
\begin{theorem}[Medini]\label{divisible}
	Let $X$ be a zero-dimensional first-countable space containing at least two points. Then $X^\omega$ is h-homogeneous if and only if $X^\omega$ is divisible by $2$.
\end{theorem}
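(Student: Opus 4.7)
The plan is to prove the two implications separately; the forward direction admits a short direct argument, while the backward direction is the substantive content and will be handled by invoking the machinery already developed in \cite{medinih}.

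\textbf{Forward direction.} Suppose $X^\omega$ is h-homogeneous. Since $X$ is zero-dimensional and contains at least two points, pick a clopen partition $X = A\sqcup B$ with both pieces nonempty. This induces a clopen decomposition $X^\omega = (A\times X^\omega)\sqcup(B\times X^\omega)$ into two nonempty clopen subsets of $X^\omega$. By h-homogeneity each piece is homeomorphic to $X^\omega$, so $X^\omega$ is homeomorphic to the topological disjoint sum of two copies of itself, which is exactly $X^\omega\times 2$. Therefore $X^\omega$ is divisible by $2$, with witness $Y = X^\omega$.

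\textbf{Backward direction.} Suppose $X^\omega\cong Y\times 2$ for some space $Y$. A useful preliminary observation is that this upgrades to $X^\omega\cong X^\omega\times 2$: using that $(X^\omega)^\omega\cong X^\omega$ (since $\omega\times\omega$ is in bijection with $\omega$) together with $2^\omega\times 2\cong 2^\omega$, one computes
\[
X^\omega\times 2 \cong (X^\omega)^\omega\times 2 \cong (Y\times 2)^\omega\times 2 \cong Y^\omega\times 2^\omega \cong (X^\omega)^\omega \cong X^\omega.
\]
The remaining task is to deduce h-homogeneity of $X^\omega$ from this absorption property, in the presence of first-countability and zero-dimensionality of $X$. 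The plan is to invoke Proposition 24 and Lemma 22 of \cite{medinih}: Lemma 22 supplies the combinatorial decomposition of an arbitrary nonempty clopen subset of $X^\omega$ into basic clopens in a form compatible with a matching decomposition of $X^\omega$ itself, while Proposition 24 performs the Cantor--Schr\"oder--Bernstein style assembly of partial homeomorphisms on matching halves into a homeomorphism from any nonempty clopen $U\subseteq X^\omega$ onto $X^\omega$.

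\textbf{Main obstacle.} The substantive difficulty lies in the backward direction, specifically in the assembly step underlying Proposition 24: partial homeomorphisms on matching clopen halves must be glued into a single homeomorphism $U\to X^\omega$. Divisibility by $2$ is precisely the property that enables the inductive halving driving this construction, while first-countability and zero-dimensionality guarantee that clopen subsets of $X^\omega$ admit decompositions fine enough for the construction to go through.
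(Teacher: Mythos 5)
The paper offers no proof of this theorem beyond the citation to \cite{medinih} (Proposition 24 and Lemma 22), and your proposal takes essentially the same route for the substantive backward direction by deferring to those same results, while your added forward-direction argument (splitting $X$ into two nonempty clopen pieces, so that h-homogeneity gives $X^\omega\cong X^\omega\times 2$) and your upgrade from divisibility by $2$ to the absorption $X^\omega\cong X^\omega\times 2$ are both correct. Since neither you nor the paper spells out the content of the cited Proposition 24 and Lemma 22, there is nothing further to compare on the hard direction.
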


An interesting consequence of Theorem \ref{divisible} is that, in order to answer Question \ref{teradaquestion} in the affirmative, it would be enough to exhibit a clopen subset $C$ of $X^\omega$ and a homeomorphism $f:X^\omega\longrightarrow X^\omega$ such that $f[C]=X^\omega\setminus C$ (and $f[X^\omega\setminus C]=C$).
While Theorem \ref{main} does not resolve Question \ref{teradaquestion}, it does show that (if one wants to give a general construction) the homeomorphism $f$ would have to be of the same kind as those constructed in \cite{dowpearl} and \cite{lawrence}.


\begin{thebibliography}{99}

\bibitem[AvM]{arkhangelskiivanmill}\textsc{A. V. Arkhangel$'$ski{\u\i}, J. van Mill.} Topological homogeneity. \emph{Recent Progress in General Topology III.} Atlantis Press, 2014. 1--68.

\bibitem[DP]{dowpearl}\textsc{A. Dow, E. Pearl.} Homogeneity in powers of zero-dimensional first-countable spaces. \emph{Proc. Amer. Math. Soc.} \textbf{125} (1997), 2503--2510.

\bibitem[vE]{vanengelen}\textsc{F. van Engelen.} On the homogeneity of infinite products. \emph{Topology Proc.} \textbf{17} (1992), 303--315.

\bibitem[En]{engelking}\textsc{R. Engelking.} \emph{General topology.} Revised and completed edition. Sigma Series in Pure Mathematics, vol. 6. Heldermann Verlag, Berlin, 1989.

\bibitem[FZ]{fitzpatrickzhou}\textsc{B. Fitzpatrick Jr., H. X. Zhou.} Some open problems in densely homogeneous spaces. \emph{Open problems in topology.} North-Holland, Amsterdam, 1990. 251--259.

\bibitem[Gr]{gruenhage}\textsc{G. Gruenhage.} Homogeneity of $X^\infty$. \emph{Top. Proc.} \textbf{15} (1990), 207--208.

\bibitem[Je]{jech}\textsc{T. Jech.} \emph{Set theory.} The third millennium edition, revised and expanded. Springer Monographs in Mathematics. Springer-Verlag, Berlin, 2003.

\bibitem[Ke]{keller}\textsc{O. H. Keller.} Die Homoiomorphie der kompakten konvexen Mengen in Hilbertschen Raum. \emph{Math. Ann.} \textbf{105} (1931), 748--758.

\bibitem[Ku]{kunen}\textsc{K. Kunen.} \emph{Set theory.} Studies in Logic (London), 34. College Publications, London, 2011.

\bibitem[La]{lawrence}\textsc{L. B. Lawrence.} A rigid subspace of the real line whose square is a homogeneous subspace of the plane. \emph{Trans. Amer. Math. Soc.} \textbf{357:7} (2005), 2535--2556.

\bibitem[Me1]{medinih}\textsc{A. Medini.} Products and h-homogeneity. \emph{Topology Appl.} \textbf{158:18} (2011), 2520--2527.

\bibitem[Me2]{medinit}\textsc{A. Medini.} \emph{The topology of ultrafilters as subspaces of the Cantor set and other topics.} Ph.D. Thesis. University of Wisconsin - Madison. ProQuest LLC, Ann Arbor, MI, 2013.

\bibitem[Me3]{medinic}\textsc{A. Medini.} Countable dense homogeneity in powers of zero-dimensional definable spaces. \emph{Can. Math. Bull.} \textbf{58:2} (2015), 334--349.

\bibitem[MvMZ]{medinivanmillzdomskyy}\textsc{A. Medini, J. van Mill, L. Zdomskyy.} A homogeneous space whose complement is rigid. \emph{Israel J. Math.} \textbf{214:2} (2016), 583--595.

\bibitem[Mv]{medvedev}\textsc{S. V. Medvedev.} On properties of h-homogeneous spaces with the Baire property. \emph{Topology Appl.} \textbf{159:3} (2012), 679--694.

\bibitem[vM]{vanmill}\textsc{J. van Mill.} \emph{The infinite-dimensional topology of function spaces.} North-Holland Mathematical Library, 64. North-Holland Publishing Co., Amsterdam, 2001.

\bibitem[Ox]{oxtoby}\textsc{J. C. Oxtoby.} Cartesian products of Baire spaces. \emph{Fund. Math.} \textbf{49:2} (1960/1961), 157--166.

\bibitem[Pr]{roy}\textsc{P. Roy.} Nonequality of dimensions for metric spaces. \emph{Trans. Amer. Math. Soc.} \textbf{134} (1968), 117--132.

\bibitem[Te]{terada}\textsc{T. Terada.} Spaces whose all nonempty clopen subsets are homeomorphic. \emph{Yokohama Math. Jour.} \textbf{40} (1993), 87--93.

\end{thebibliography}
\end{document}